\newtheorem{theorem}{Theorem}
\newtheorem*{theorem*}{Theorem}
\newtheorem*{acknowledgement*}{Acknowledgement}
\newtheorem*{definition*}{Definition}
\newtheorem{corollary}[theorem]{Corollary}
\newtheorem{lemma}[theorem]{Lemma}
\newtheorem*{notation*}{Notation}
\newtheorem{proposition}[theorem]{Proposition}
\newcommand{\RR}[0]{{\mathbb{R}}}
\newcommand{\pd}[2]{\frac{\partial #1}{\partial#2}}
\newcommand{\pdt}[0]{\frac{\partial}{\partial t}}
\newcommand{\gt}[0]{\tilde{g}}
\newcommand{\Gam}[0]{\Gamma}
\newcommand{\Gamt}[0]{\widetilde{\Gamma}}
\newcommand{\Rc}[0]{\operatorname{Rc}}
\newcommand{\Rm}[0]{\operatorname{Rm}}
\newcommand{\Xt}[0]{\widetilde{X}}
\newcommand{\Et}[0]{\widetilde{E}}
\newcommand{\Vt}[0]{\widetilde{V}}
\newcommand{\Pt}[0]{\widetilde{P}}
\newcommand{\Rmt}[0]{\widetilde{\operatorname{Rm}}}
\newcommand{\dfn}[0]{\doteqdot}
\newcommand{\wt}[1]{\widetilde{#1}}
\newcommand{\Rt}[0]{\wt{R}}
\newcommand{\nabt}[0]{\wt{\nabla}}
\newcommand{\Deltat}[0]{\widetilde{\Delta}}
\newcommand{\Ec}[0]{\mathcal{E}}
\newcommand{\Gc}[0]{\mathcal{G}}
\newcommand{\Hc}[0]{\mathcal{H}}
\newcommand{\Kc}[0]{\mathcal{K}}
\newcommand{\Lc}[0]{\mathcal{L}}
\newcommand{\Wc}[0]{\mathcal{W}}
\newcommand{\Xc}[0]{\mathcal{X}}
\newcommand{\Yc}[0]{\mathcal{Y}}
\newcommand{\St}[0]{\tilde{S}}
\title[Uniqueness for higher-order geometric flows]
{An energy approach to uniqueness for higher-order geometric flows}
\author{Brett Kotschwar}
\address{Arizona State University, Tempe, AZ, USA}
\email{kotschwar@asu.edu}
\date{Aug 2014}
\begin{document}

\begin{abstract}
We describe a simple, direct method to prove the uniqueness of solutions to a broad class of parabolic geometric evolution
equations. Our argument, which is based on a prolongation procedure and the consideration of 
certain natural energy quantities, does not require the solution of any auxiliary parabolic systems. In previous work, we used a variation of this technique to 
give an alternative proof of the uniqueness of complete solutions to the Ricci flow of uniformly bounded curvature. Here
we extend this approach to curvature flows of all orders, including the $L^2$-curvature flow
and a class of quasilinear higher-order flows related to the obstruction tensor. We also detail
its application to the fully nonlinear cross-curvature flow.
\end{abstract}

\keywords{}

\maketitle

\section{Introduction}
We revisit the problem of uniqueness for parabolic curvature flows
on a smooth manifold $M$. The invariance of such curvature flows under $\operatorname{Diff}(M)$
implies that the systems of equations they represent are never strictly parabolic. 
This basic geometric degeneracy must be circumvented in some fashion in order to apply standard techniques from parabolic theory.
The usual device for this purpose is the DeTurck trick (\cite{DeTurck} 
\cite{HamiltonSingularities}). As it concerns uniqueness, DeTurck's trick is at its heart an exchange of one nonstandard problem for two standard ones: the problem of
of uniqueness
for the typically weakly-parabolic curvature flow for separate problems of existence and uniqueness for two auxiliary
strictly parabolic systems.
%For example, the problem of uniqueness of solutions to the Ricci flow
%\cite{HamiltonSingularities}, \cite{ChenZhu}), can be exchanged for
%that of the short-time existence of a certain harmonic-map-type parabolic system and the uniqueness
%of (a suitably well-behaved class of) solutions to the Ricci-DeTurck flow.

When $M$ is compact, or the problems for the auxiliary
equations otherwise belong to established theory, this is a purely advantageous exchange. (See \cite{BahuaudHelliwellU}, e.g., 
for a careful treatment of this approach to a class of higher-order equations.) When $M$ is noncompact, however,
the existence and uniqueness of the solutions to the auxiliary equations may themselves be nontrivial questions. For example, the implementation of the DeTurck trick carried out in \cite{ChenZhu} for 
complete noncompact solutions to the Ricci flow
requires the authors to prove a novel and rather delicate existence theorem to obtain solutions, with suitable estimates, 
to a certain harmonic map-type heat flow. (Their
approach has the additional benefit, however, of establishing the existence of a well-controlled 
solution to the Ricci-DeTurck flow associated to every complete solution to the Ricci flow of bounded curvature.)

We demonstrate here that it is possible to prove the uniqueness of solutions
to many curvature flows directly, without the introduction of any auxiliary parabolic system.
The idea is to reframe the problem
as one for a prolonged system to which a classical $L^2$-energy argument can be applied.
As in the strictly-parabolic case, this method moreover yields a coarse quantitative bound on the $L^2$-norm
of the difference of the solutions.  This is an extension of the technique developed in our earlier paper \cite{KotschwarRFU}
for the Ricci flow and a somewhat more general class of second-order systems. Bell \cite{BellConformalRF} has also recently applied this technique
to the conformal Ricci flow. The purpose of this paper is to generalize this technique to higher-order equations.  

For concreteness, we first consider a class of quasilinear higher-order parabolic equations 
considered in the recent papers of
Bahuaud and Helliwell \cite{BahuaudHelliwellSTE, BahuaudHelliwellU} and give an alternative proof
of the uniqueness result in the latter reference. This class includes 
the $L^2$-curvature flow \cite{StreetsL2} and a family of equations introduced in the papers \cite{Bour}, \cite{BahuaudHelliwellSTE}, \cite{BahuaudHelliwellU}
motivated by the study of the ambient obstruction tensor.  We then formulate a general uniqueness result, and consider a
its application to a further example, the fully nonlinear cross-curvature flow \cite{ChowHamiltonXCF}.  

We will restrict our attention here to compact manifolds where the demonstration of our technique is simplest
and most transparent. With suitable weight functions and estimates on the solutions and the addition of sufficiently rapidly decaying 
weights to the integral quantities, however, this technique can be extended to the noncompact setting. We have
explored this extension already in \cite{KotschwarRFU} for the Ricci flow, where we prove a slight generalization
of the standard uniqueness theorem of \cite{Hamilton3D} and \cite{ChenZhu}
to a class of solutions with some spatial growth of curvature and some potential blow-up of curvature as $t\searrow 0$.

\section{A class of quasilinear curvature flows}

In this section, we demonstrate the energy technique on a class of curvature flows considered
by Bahuaud and Helliwell \cite{BahuaudHelliwellSTE}, \cite{BahuaudHelliwellU}.  The equations have the form
\begin{equation}\label{eq:kcf}
      \pdt g = \Theta_{2k}(g) + \Lambda_{2k-1}(g)
\end{equation}
where the leading order terms satisfy
\[
 \Theta_{2k}(g) \dfn (-1)^{k+1}2(\Delta^{(k)}\Rc + \alpha \Delta^{(k)} S g + \beta \Delta^{(k-1)}\nabla\nabla S)
\]
and the lower-order terms $\Lambda_{2k-1}(g)$ are some polynomial expression in $g$, $g^{-1}$ and the covariant
derivatives of $R$ up to order $2k-1$. 
Here, $\alpha$ and $\beta$ are constants, and $\nabla = \nabla_g$, $\Delta = \Delta_g$, $R = \Rm(g)$, $\Rc = \Rc(g)$, and $S = S(g)$ denote, respectively, 
the Levi-Civita connection, Laplacian, $(4, 0)$-curvature tensor,
 Ricci curvature tensor, and scalar curvature associated to $g$.  We permit $k=0$ when $\beta =0$.

Included in this family of equations are the Ricci flow
 \begin{equation}\label{eq:rf}
   \pdt g = -2\Rc(g),
 \end{equation}
and the  $L^2$-curvature flow of J.\ Streets \cite{StreetsL2}, 
 \begin{equation}\label{eq:l2}
   \pdt g_{ij} = 2 (\Delta R_{ij} -\frac{1}{2}\nabla_i\nabla_j S) + 2R^{pq}R_{ipqj} -R^p_iR_{pj} + R_{i}^{pqr}R_{jpqr} -\frac{1}{4}|\Rm|^2g_{ij},
 \end{equation}
which is the negative gradient flow of the squared $L^2$-norm of $R$.

When $n\geq 4$ is even, $k=n/2 -1$, and $\beta = -(n-2)/(2(n-1))$, for an appropriate choice of $\Lambda$, equation \eqref{eq:kcf} is the so-called \emph{obstruction flow}
\[
  \pdt g = c_n\left(\mathcal{O}_n + (-1)^{\frac n 2}\left(\alpha +\frac{1}{2(n-1)}\right)\Delta^{\frac{n}{2} - 1}Sg \right),
\]
where
\[
 \mathcal{O}_n = \frac{1}{(-2)^{\frac{n}{2}}(n/2 - 2)!}\left(\Delta^{\frac{n}{2}-1}P -\frac{1}{2(n-1)}\Delta^{\frac{n}{2}-2}\nabla\nabla S\right)
\]
is the ambient obstruction tensor of Fefferman-Graham \cite{FeffermanGraham}. Here,
  $P = (n-2)^{-1}(\Rc - (1/(2(n-1))Sg)$ is the Schouten tensor. When $n=4$, the tensor $\mathcal{O}_4$ is the Bach tensor  
\[
 B_{ij} = \Delta P_{ij} - \nabla_i\nabla_j P - 2P^{kl}W_{kijl} + |P|^2g_{ij} - 4P^{2}_{ij}.
\]
These flows were introduced by Bour (in the case $k=1$) and by Bahuaud and Helliwell in the case $k \geq 2$
as a part of a proposed strategy to obtain obstruction-flat manifolds dynamically. Short-time existence is not known for the flow 
by the pure obstruction tensor.

The following theorem is due to Bahuaud-Helliwell \cite{BahuaudHelliwellU}. We will give an alternative
proof based on Theorem \ref{thm:energyder} below.
\begin{theorem}[\cite{BahuaudHelliwellU}, Theorem A]\label{thm:hocfuniqueness} 
Let $M= M^n$ be a compact manifold and $\alpha > -1/(2(n-1))$, $\beta \in \RR$, and $\Omega > 0$ given constants.
If $g$ and $\gt$ are solutions to \eqref{eq:kcf} on $M\times [0, \Omega]$
which satisfy that $g(0) = \gt(0)$, then $g \equiv \gt$ on $M\times [0, \Omega]$. 
\end{theorem}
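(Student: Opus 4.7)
My plan is to adapt the $L^2$-energy and prolongation technique of \cite{KotschwarRFU} to the $(2k+2)$-order setting. The overall strategy is to reframe the uniqueness problem as a quantitative bound on a suitable energy functional of tensorial differences between the two solutions, thereby avoiding the need to introduce and uniquely solve any auxiliary $(2k+2)$-order DeTurck-type parabolic system.

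To this end, I would first introduce on $M$ the prolonged collection of geometric differences: the metric difference $h\dfn g-\gt$, the difference of connections $A\dfn\Gamma-\Gamt$ (a genuine $(1,2)$-tensor), and a tower of curvature-derivative differences $T_j\dfn\nabla^j\Rm(g)-\nabt^{j}\Rm(\gt)$ for $j=0,1,\dots,J$. Here $J$ is chosen (I expect $J\sim 2k$) so that, after subtracting the equations \eqref{eq:kcf} satisfied by $g$ and $\gt$, one can write
\[
    \pdt h = P_0(h,A,T_0,\dots,T_J)
\]
with $P_0$ a polynomial (no spatial derivatives) in its arguments. Using the Bianchi identities and the algebraic relations expressing $\nabla^\ell A$ and $\nabla^\ell h$ in terms of $T_0,\dots,T_\ell$ modulo lower-order terms, one derives analogous evolution equations $\pdt A = P_1$ and $\pdt T_j = Q_j$ for each generation. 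The crucial equation is the one for the top-level tensor $T_J$, whose principal part should coincide with $(-1)^{k+1}\Delta^{k+1}T_J$ up to trace and gradient-of-scalar terms depending on $\alpha$ and $\beta$.

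Next, form the weighted energy
\[
  \Ec(t) \dfn \int_M\!\Bigl(|h|^2 + |A|^2 + \sum_{j=0}^{J} \tau_j(t)\,|T_j|^2\Bigr)\,dV_g,
\]
with strictly positive weights $\tau_j(t)$ (likely suitable powers of $t$) chosen so that $\Ec(0)=0$ and so that the cross-couplings balance. Differentiating $\Ec$ in time and substituting the evolution equations, the decisive term is $\int 2T_J\cdot(-1)^{k+1}\Delta^{k+1}T_J\,dV_g$; repeated integration by parts converts this to a nonpositive principal contribution of the form $-c\int|\nabla^{k+1}T_J|^2\,dV_g$, provided the principal symbol is coercive modulo the diffeomorphism gauge directions. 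This coercivity is exactly the content of the hypothesis $\alpha > -1/(2(n-1))$, which is the standard strong-ellipticity condition for the linearization of \eqref{eq:kcf} after quotienting by $\operatorname{Diff}(M)$. The remaining cross-terms in $\Ec'(t)$ are handled by Cauchy--Schwarz, absorption of top-derivative pieces into the good term using the identities relating $\nabla^\ell A,\nabla^\ell h$ back to the $T_j$'s, and the uniform bounds on $g$, $\gt$, and their covariant derivatives of all orders (automatic on the compact set $M\times[0,\Omega]$). The conclusion is $\Ec'(t)\le C\Ec(t)$ on $(0,\Omega]$; since $\Ec(0)=0$, Grönwall yields $\Ec\equiv 0$ and hence $g\equiv\gt$.

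The main obstacle I foresee is purely organizational: choosing the depth $J$ and weights $\tau_j$ correctly and then verifying that every high-derivative expression appearing in any $Q_j$ can be absorbed either into the single good term $-c\|\nabla^{k+1}T_J\|_{L^2}^2$ or into the lower-order energy summands via interpolation. For $k=0$ (the Ricci flow case of \cite{KotschwarRFU}) this is a clean two-generation calculation; for general $k$ one must iterate the Bianchi and commutator identities many times, while simultaneously arranging the gauge/trace terms governed by $\alpha$ and $\beta$ so that the principal symbol remains coercive on the orthogonal complement of the gauge directions. Factoring this bookkeeping through a general energy-inequality statement such as the promised Theorem \ref{thm:energyder} should keep the argument both transparent and reusable for the other flows treated later in the paper.
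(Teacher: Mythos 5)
Your overall strategy---prolong to the system $(h, A, \nabla^{j}\Rm - \nabt^{j}\Rmt)$, form an $L^2$ energy, integrate by parts to extract a good term $-c\|\nabla^{(k+1)}T_J\|^2$, and close with interpolation and Gr\"onwall---is exactly the paper's approach (the paper uses $h$, $\nabla^{(k)}A$, $X^{(0)}$, $X^{(2k)}$, and constant weights; your time-dependent weights $\tau_j(t)$ are unnecessary on a compact manifold, since all differences vanish identically at $t=0$). However, there is a genuine gap in how you propose to use the hypothesis $\alpha > -1/(2(n-1))$. The principal part of the evolution of $X^{(l)} = \nabla^{(l)}R - \nabt^{(l)}\Rt$ is not just $(-1)^k\Delta^{(k+1)}X^{(l)}$; it also contains the term $(-1)^k\alpha(\nabla\nabla\Delta^{(k)}Z^{(l)})\odot g$, where $Z^{(l)} = \nabla^{(l)}S - \nabt^{(l)}\St$. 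Pairing this with $X^{(l)}$ and integrating by parts (using the contracted Bianchi identity) produces, to leading order, $-4\alpha\|\nabla^{(k+1)}Z^{(l)}\|^2$, which is \emph{positive} when $\alpha<0$. Since $Z^{(l)}$ is a trace of $X^{(l)}$ (plus $h$-terms), this bad term is comparable to $4|\alpha|\,C^2\|\nabla^{(k+1)}X^{(l)}\|^2$ with a constant $C$ that is not small, so it cannot in general be absorbed into the single good term $-2\|\nabla^{(k+1)}X^{(l)}\|^2$. Your appeal to ``coercivity of the principal symbol modulo gauge directions'' identifies the right heuristic but does not supply the mechanism within an energy argument: the energy you wrote down has no scalar-curvature component, and with it the argument as stated fails for $\alpha\in(-1/(2(n-1)),0)$.

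The paper's fix is to add $r\|Z^{(2k)}\|^2$ to the energy. The scalar curvature difference satisfies its own strictly parabolic equation with leading operator $(-1)^k(1+2\alpha(n-1))\Delta^{(k+1)}$, so its energy contributes a good term $-2r(1+2\alpha(n-1))\|\nabla^{(k+1)}Z^{(2k)}\|^2$, whose sign is favorable \emph{precisely} under the hypothesis $\alpha > -1/(2(n-1))$; choosing $r > -2\alpha/(1+2\alpha(n-1))$ then cancels the bad term. You should also make explicit why $\beta$ is unrestricted: the term $\beta\Delta^{(k-1)}\nabla\nabla S$ is, up to lower order, a Lie derivative $\mathcal{L}_{\nabla\Delta^{(k-1)}S}\,g$, so diffeomorphism invariance ($DR_g[\mathcal{L}_Xg] = \mathcal{L}_XR_g$) shows it contributes no principal term to the evolution of the curvature and hence never threatens coercivity. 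Finally, a smaller but real point: your energy carries only $\|A\|^2$, whereas the evolution of the top-level tensor involves $\nabla^{(p)}A$ up to order $2k+1$; after integration by parts you still need $\|\nabla^{(k)}A\|^2$ in the energy (together with the interpolation inequality bounding $\|\nabla^{(l)}A\|^2$ by $\|h\|^2 + \|\nabla^{(k)}A\|^2$ for $l\le k$) to close the estimates.
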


The idea of our argument is to embed the problem of uniqueness for the solutions of \eqref{eq:kcf} into one for the uniqueness
of solutions to a larger system centered on the differences $\nabla^{(l)}R-\nabt^{(l)}\Rt$ of the covariant derivatives of the curvature tensors.
Individually, $\nabla^{(l)}R$ and $\nabt^{(l)}\Rt$ will satisfy evolution equations that are parabolic (or nearly so), 
but with respect to \emph{different} operators depending on  $g$ and $\gt$. The equation for their difference will therefore involve
the derivatives $\nabla^{(l)}(g - \gt)$ of the difference of the solutions up to order $2k$. 
To obtain a closed system, we will thus need to add to the system sufficiently many derivatives of $g- \gt$. 
Although they will not themselves satisfy parabolic equations, the derivatives of $g -\gt$ can be controlled
pointwise, via their evolution equations,
by sufficiently many differences of the derivatives of the curvature tensors. 
With the right mix of these higher- and lower-order components, we obtain a closed system of mixed differential inequalities
to which standard energy methods can be applied. This prolongation procedure has its origins in the work of Alexakis \cite{Alexakis} and Wong-Yu 
\cite{WongYu}.

For example, in the case of the $L^2$-curvature flow \eqref{eq:l2}, our prolonged system will consist
of the quantities $g-\gt$, $\nabla( \Gamma - \Gamt)$,
$R - \Rt$, and $\nabla^{(2)}R - \nabt^{(2)}\Rt$.  The rest of the argument is just a straightforward computation
to show that
\[
    \Ec(t) \dfn \int_{M}\left(|g-\gt|^2_{g(t)} + |\nabla(\Gamma - \Gamt)|^2_{g(t)} + |R - \Rt|^2_{g(t)} + |\nabla^{(2)}R - \nabt^{(2)}\Rt|_{g(t)}^2\right)\,d\mu_{g(t)}  
\]
satisfies the differential inequality $\dot{\Ec} \leq C\Ec$ for some $C$.

\subsection{The energy quantity}
Let $M= M^n$ now be a compact manifold, and $g(t)$, $\gt(t)$ are two smooth solutions to \eqref{eq:kcf} for some fixed $k\geq 1$
 on $M\times [0, \Omega]$. We will denote their Levi-Civita connections and $(4, 0)$-curvature tensors
by $\nabla$ and $\nabt$ and $\Rm$ and $\Rmt$, respectively. For any $l=0, 1, \ldots$, we define the tensors
\[
h \dfn g- \gt, \quad A \dfn \nabla - \nabt,\quad X^{(l)} \dfn \nabla^{(l)}\Rm - \nabt^{(l)}\Rmt,\quad Z^{(l)} \dfn \nabla^{(l)}S - \nabt^{(l)}\St.  
\]
Here $A$ is the family of $(2, 1)$-tensors given in local coordinates by $A_{ij}^k = \Gamma_{ij}^k - \Gamt_{ij}^k$.  

We will
use one of the solutions, $g(t)$, as a reference metric and, for each $t\in [0, \Omega]$, consider the $L^2(d\mu_{g(t)})$-inner product
induced on various tensor bundles over $M$.  Given a bundle $\mathcal{Z}$ over $M$ and smooth sections $U$, $V\in \mathcal{Z}$,
we will write 
\[
   (U, V) \dfn (U, V)_{L^2(d\mu_{g(t)})}\dfn  \int_{M}\langle U, V\rangle_{g(t)}\,d\mu_{g(t)}, \quad \|U\|^2 \dfn (U, U).
\]
Below, we will simply use 
the unadorned notation 
$|\cdot| \dfn |\cdot|_{g(t)}$ and $d\mu = d\mu_{g(t)}$ to represent, respectively, the norms induced by $g(t)$ on various tensor bundles 
and the Riemannian measure of $g(t)$. We will also use $U\ast V$ to denote a linear combination of contractions of $U\otimes V$ by the metric $g(t)$.  (Should we wish to emphasize
the role of the metric in the contraction or denote contractions by the metric $\gt$, we will use instead the notation
$U\ast_gV$ and $U\ast_{\gt}V$.)

Define the quantities
\begin{align*}
  \Gc(t) &\dfn \|h\|^2 + \|\nabla^{(k)}A\|^2, \quad
  \Hc(t) \dfn \|X^{(0)}\|^2 +  \|X^{(2k)}\|^2, \quad
  \Kc(t) \dfn \|Z^{(2k)}\|^2.
\end{align*}
We will show below that these quantities satisfy the following system of inequalities.
\begin{theorem}\label{thm:energyder} 
For any $\epsilon > 0$, there exists a constant $C$ such that the quantities $\Gc$, $\Hc$, and $\Kc$ satisfy the differential inequalities
\begin{align}
 \begin{split}\label{eq:gder}
  \dot{\Gc} &\leq C(\Gc + \Hc) + 2\epsilon\|\nabla^{(k+1)}X^{(2k)}\|^2
 \end{split}\\
 \begin{split}\label{eq:hder}
  \dot{\Hc} &\leq C(\Gc + \Hc) - 2(1 - \epsilon)\|\nabla^{(k+1)}X^{(2k)}\|^2 - 4\alpha\|\nabla^{(k+1)}Z^{(2k)}\|^2
\end{split}\\
 \label{eq:kder}
  \dot{\Kc} &\leq C(\Gc + \Hc) + 2\epsilon\|\nabla^{(k+1)}X^{(2k)}\|^2  -2(1 +2\alpha(n-1))\|\nabla^{(k+1)}Z^{(2k)}\|^2
\end{align}
on the interval $[0, \Omega]$.
\end{theorem}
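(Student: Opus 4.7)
The plan is a direct $L^2$-energy calculation for each component of $\Gc$, $\Hc$, and $\Kc$. Since $M$ is compact and both $g$ and $\tilde g$ are smooth on $M\times[0,\Omega]$, the metrics, their inverses, and all their covariant derivatives are uniformly bounded; consequently all ``coefficient'' tensors arising in the evolution equations below are in $L^\infty$, and intermediate-order difference tensors $X^{(l)}$, $Z^{(l)}$ with $0<l<2k$ can be controlled in $L^2$ in terms of $\|X^{(0)}\|$, $\|X^{(2k)}\|$, $\|\nabla^{(k)}A\|$, and $\|h\|$ by standard interpolation on $M$. Such cross-terms are absorbed into $C(\Gc+\Hc)$, so the main task is to identify and manipulate the principal-order contributions correctly.

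I would first derive schematic evolution equations. Setting $V\dfn (\Theta_{2k}+\Lambda_{2k-1})(g)-(\Theta_{2k}+\Lambda_{2k-1})(\tilde g)$, one has $\partial_t h = V$. The variation formula for Christoffel symbols yields $\partial_t A = \tilde g^{-1}\ast\nabla V + h\ast\nabla V$, and differentiating $k$ further times shows that $\partial_t\nabla^{(k)}A$ has highest-order content $\nabla^{(k+1)}V$, which contains both $\nabla^{(k+1)}X^{(2k)}$ and $\nabla^{(k+1)}Z^{(2k)}$. On the curvature side, the Lichnerowicz-type formulas for the linearizations of $\Rm$ and $S$ under $\partial_t g = V$, combined with the explicit form of $\Theta_{2k}$, give parabolic-type equations
\[
\partial_t X^{(2k)} = (-1)^k\Delta^{k+1}X^{(2k)} + E^{(2k)},\qquad \partial_t Z^{(2k)} = (-1)^k\bigl(1+2(n-1)\alpha\bigr)\Delta^{k+1}Z^{(2k)} + F^{(2k)},
\]
where the scalar coefficient $1+2(n-1)\alpha$ arises by tracing the flow and invoking the twice-contracted second Bianchi identity $\nabla^i\nabla^j R_{ij}=\tfrac{1}{2}\Delta S$. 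The error tensors $E^{(2k)}, F^{(2k)}$ are polynomial contractions of $h$, $\nabla^{(j)}A$, $X^{(l)}$, $Z^{(l)}$, each factor of total differential order strictly less than $2(k+1)$.

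For $\dot{\Gc}$, the pairing $2(h,V)$ together with the $\partial_t d\mu_g$ and $\partial_t g$ contributions to $\tfrac{d}{dt}\|h\|^2$ is bounded by $C(\Gc+\Hc)$ via Cauchy-Schwarz and interpolation. For $\|\nabla^{(k)}A\|^2$ the dominant pairing is $(\nabla^{(k)}A,\nabla^{(k+1)}V)$; its dangerous summand $(\nabla^{(k)}A,\nabla^{(k+1)}X^{(2k)})$ I would estimate by Cauchy-Schwarz with parameter $\epsilon$ to produce $\epsilon\|\nabla^{(k+1)}X^{(2k)}\|^2+C_\epsilon\|\nabla^{(k)}A\|^2$, absorbing the analogous $\nabla^{(k+1)}Z^{(2k)}$ contribution into $\nabla^{(k+1)}X^{(2k)}$ via its representation as a contraction thereof (plus lower-order errors). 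For $\dot{\Hc}$ and $\dot{\Kc}$, I would pair each parabolic evolution with its tensor and integrate by parts $k+1$ times on the $(-1)^k\Delta^{k+1}$ term, producing $-2\|\nabla^{(k+1)}X^{(2k)}\|^2$ in $\dot{\Hc}$ and $-2\bigl(1+2(n-1)\alpha\bigr)\|\nabla^{(k+1)}Z^{(2k)}\|^2$ in $\dot{\Kc}$; the $X^{(0)}$ piece of $\Hc$ produces a favorable $-2\|\nabla^{(k+1)}X^{(0)}\|^2$ that I would simply discard. The $-4\alpha\|\nabla^{(k+1)}Z^{(2k)}\|^2$ term in $\dot{\Hc}$ arises from the $\alpha\Delta^k S g$ summand of $V$ coupling back into the linearization of $\Rm$ through its trace part. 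Residual contributions from $E^{(2k)}$, $F^{(2k)}$, the variation of $d\mu_g$, and the metric dependence of the pointwise norms yield the remaining $C(\Gc+\Hc)+\epsilon\|\nabla^{(k+1)}X^{(2k)}\|^2$ by repeated Cauchy-Schwarz and interpolation.

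The main obstacle is the coefficient bookkeeping required to extract the exact constants $-2(1-\epsilon)$, $-4\alpha$, and $-2(1+2(n-1)\alpha)$. This forces one to write out the full Lichnerowicz-type linearizations of $\Rm$, $\Rc$, and $S$ under the variation $V$, use the contracted and twice-contracted second Bianchi identities to rewrite divergences of Ricci and of $\nabla\nabla S$ as Laplacians of $S$, and carefully assemble the resulting constant-coefficient combinations after the $k+1$ integrations by parts. The hypothesis $\alpha>-1/(2(n-1))$ enters precisely to ensure $1+2(n-1)\alpha>0$, making the coefficient of $\|\nabla^{(k+1)}Z^{(2k)}\|^2$ in \eqref{eq:kder} strictly negative; this is the ingredient that the downstream Gronwall-type combination of \eqref{eq:gder}--\eqref{eq:kder} will rely upon.
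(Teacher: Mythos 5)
Your strategy coincides with the paper's: prolong to $h$, $\nabla^{(k)}A$, $X^{(0)}$, $X^{(2k)}$, $Z^{(2k)}$, pair each evolution equation with its tensor, integrate by parts $k+1$ times on the principal term via G\aa{}rding's inequality, control intermediate quantities by interpolation, and use $\epsilon$-Cauchy--Schwarz on the top-order cross terms in $\dot{\Gc}$. One caution on the interpolation step: it is not purely ``standard interpolation on $M$,'' because $X^{(q)}=\nabla^{(q)}R-\nabt^{(q)}\Rt$ mixes two connections; relating $X^{(q)}$ to $\nabla^{(q)}X^{(0)}$ (or to $\nabla^{(q-k)}X^{(k)}$, etc.) requires the connection-difference formula and hence brings in the $\|\nabla^{(p)}A\|$ terms. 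This is the content of Lemmas \ref{lem:ainterpolation} and \ref{lem:xzinterpolation} and takes real work, but your framework accommodates it.

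The genuine soft spot is the $\alpha$-term. Your schematic $\partial_t X^{(2k)}=(-1)^k\Delta^{(k+1)}X^{(2k)}+E^{(2k)}$ with $E^{(2k)}$ of ``total differential order strictly less than $2(k+1)$'' is false for $\alpha\neq 0$: the linearization of $\alpha\Delta^{(k)}Sg$ contributes $(-1)^k\alpha(\nabla\nabla\Delta^{(k)}Z^{(2k)})\odot g$, which is of full principal order, and you later acknowledge it only to file its reduction under ``coefficient bookkeeping.'' It is not bookkeeping: after $k+1$ integrations by parts this pairing has the shape $(\nabla^{(k+1)}Z^{(2k)},\nabla^{(k+1)}X^{(2k)})$, and any Cauchy--Schwarz split leaves either a term $+C_\epsilon\|\nabla^{(k+1)}Z^{(2k)}\|^2$ with an uncontrolled constant (which $\Kc$ cannot compensate for general $\alpha$ in the stated range, since its good coefficient is fixed) or an unabsorbable multiple of $\|\nabla^{(k+1)}X^{(2k)}\|^2$. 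The only exit --- to which the paper devotes Proposition \ref{prop:scalarop} and Lemma \ref{lem:alphaterm} --- is to integrate by parts so that the two outer derivatives land on the curvature-difference factor, contract the Kulkarni--Nomizu $g$'s to extract its Ricci trace, and apply the contracted second Bianchi identity to convert $g^{ar}g^{du}\nabla_d\nabla_a(\nabla_\delta R_{ru}-\nabt_\delta\Rt_{ru})$ into $\tfrac12\Delta Z^{(2k)}$ plus commutator errors, so that the pairing closes into $2(\Delta^{(k+1)}Z^{(2k)},Z^{(2k)})$ and hence the exact term $-4\alpha\|\nabla^{(k+1)}Z^{(2k)}\|^2$. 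You do name the Bianchi mechanism, so the idea is present in outline, but the proof is incomplete until this identity is actually shown to close the pairing into a $Z$-only quantity with the precise coefficient; that exactness is what the sign analysis in Corollary \ref{cor:eder} relies on for the whole range $\alpha>-1/(2(n-1))$.
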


Theorem \ref{thm:hocfuniqueness} may then be proven by considering an appropriate combination of $\Gc$, $\Hc$, and $\Kc$.
\begin{corollary}\label{cor:eder}
 For any $r \geq 0$ and any $\epsilon > 0$, there is a constant $C$ depending on $\epsilon$
and the solutions $g$ and $\gt$ such that $\Ec \dfn \Gc + \Hc + r \Kc$
satisfies
\begin{equation}\label{eq:eev2}
 \dot{\Ec} \leq C(1+r)\Ec + a(\epsilon, r)\|\nabla^{(k+1)}X^{(2k)}\|^2  + b(\alpha, n, r)\|\nabla^{(k+1)}Z^{(2k)}\|^2
\end{equation}
on $[0, \Omega]$ where 
\[
  a(\epsilon, r) =  -2(1 - \epsilon(r+2)), \quad b(\alpha, n, r) = -2(2\alpha + r(1 + 2\alpha(n-1))).
\]
In particular, when $\alpha + 1/(2(n-1)) > 0$, choosing
   $r > -2\alpha/(1+2\alpha(n-1))$ and  $\epsilon <  1/(r+2)$, we have $a$, $b < 0$,
so $\dot{\Ec}(t) \leq C\Ec(t)$ on $[0, \Omega]$ and
\[
 \Ec(t_2) \leq e^{C(t_2 - t_1)}\Ec(t_1)
\]
for any $0\leq t_1 \leq t_2 \leq \Omega$.
\end{corollary}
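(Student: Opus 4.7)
The plan is to obtain \eqref{eq:eev2} by simply taking the linear combination of the three inequalities in Theorem \ref{thm:energyder} with weights $1$, $1$, and $r$ and then reading off the coefficients. The lower-order contributions combine to $(2+r)C(\Gc + \Hc) \le C(1+r)\Ec$, since $r\Kc \ge 0$. The coefficient of the term $\|\nabla^{(k+1)}X^{(2k)}\|^2$ is $2\epsilon -2(1-\epsilon) + 2r\epsilon = -2(1 - \epsilon(r+2))$, which is $a(\epsilon,r)$; and the coefficient of $\|\nabla^{(k+1)}Z^{(2k)}\|^2$ is $0 - 4\alpha - 2r(1+2\alpha(n-1)) = -2(2\alpha + r(1 + 2\alpha(n-1)))$, which is $b(\alpha,n,r)$. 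These identities immediately yield \eqref{eq:eev2}.

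For the second assertion, I first note that the hypothesis $\alpha > -1/(2(n-1))$ is equivalent to $1 + 2\alpha(n-1) > 0$, so the threshold $-2\alpha/(1+2\alpha(n-1))$ is a well-defined real number, nonpositive when $\alpha \ge 0$ and positive but finite when $-1/(2(n-1)) < \alpha < 0$. In either case I may choose $r \ge 0$ strictly greater than this threshold, which forces $2\alpha + r(1+2\alpha(n-1)) > 0$ and hence $b(\alpha,n,r) < 0$. With $r$ now fixed, I pick any $\epsilon \in (0, 1/(r+2))$, which forces $1 - \epsilon(r+2) > 0$ and hence $a(\epsilon,r) < 0$. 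The two parameter conditions are manifestly compatible since $1/(r+2) > 0$ for any finite $r$.

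With both $a$ and $b$ negative, the two gradient terms on the right-hand side of \eqref{eq:eev2} are nonpositive and can be discarded, leaving $\dot{\Ec}(t) \le C(1+r)\Ec(t)$ on $[0,\Omega]$. Renaming $C(1+r)$ as $C$, a standard application of Gr\"onwall's inequality to this scalar ODE yields $\Ec(t_2) \le e^{C(t_2 - t_1)}\Ec(t_1)$ for all $0 \le t_1 \le t_2 \le \Omega$, as claimed. There is really no obstacle here: the entire content of the corollary lies in the algebraic compatibility of the sign conditions on $a$ and $b$, and all the analytic substance has already been absorbed into Theorem \ref{thm:energyder}.
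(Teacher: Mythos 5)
Your proposal is correct and is exactly the argument the paper intends (the corollary is stated without a separate proof precisely because it follows by taking the combination $\Gc + \Hc + r\Kc$ of the three inequalities in Theorem \ref{thm:energyder} and checking the signs of $a$ and $b$, as you do). The arithmetic for $a(\epsilon,r)$ and $b(\alpha,n,r)$, the compatibility of the choices of $r$ and $\epsilon$ under $\alpha + 1/(2(n-1)) > 0$, and the concluding Gr\"onwall step are all as in the paper.
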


 When $\alpha \geq 0$, as, e.g., in the $L^2$-curvature flow, one can take $r = 0$ and drop the term $\Kc$ from the quantity $\Ec$ altogether.
It is included only to 
balance the potentially positive contribution proportional to $\|\nabla^{(k+1)}Z^{(2k)}\|^2$ 
in the evolution equation \eqref{eq:hder} for $\Hc$. A similar device was used by Bour in \cite{Bour} 
to obtain $L^2$-Bernstein-Bando-Shi-type estimates for solutions to equation \eqref{eq:kcf} with $k=1$.

\subsection{Evolution equations and commutation relations}

The verification of the inequalities \eqref{eq:gder} - \eqref{eq:kder} is a long but straightforward computation,
for which we will first need to collect the evolution equations for the connection and for the covariant derivatives
of the curvature tensor of a solution of \eqref{eq:kcf}.

We begin with two standard commutation identities. Here and below,  we will use the notation $P_g^l(R)$ to denote a polynomial expression formed from sums of contractions of the tensor products of
various factors of $R, \nabla R, \ldots, \nabla^{(l)}R$ taken with respect to the metric $g$.

\begin{lemma}\label{lem:commrel} Suppose $g = g(t)$ is a smooth solution to \eqref{eq:kcf} and $W = W(t)$ is a smooth family of tensor fields on $M$.
Then, for any multi-indices $\beta$ and $\gamma$ of lengths $l$ and $m$, respectively, we have the commutation
relations
\begin{equation}\label{eq:commrel}
 \left[(\nabla)_{\beta}, (\nabla)_{\gamma}\right]W
      =\sum_{p=0}^{l+m-2}g^{-1}\ast\nabla^{(l+m-2-p)}R \ast \nabla^{(p)}W,
\end{equation}
and
\begin{equation}\label{eq:dtdm}
\left[\pdt , \nabla^{(r)}\right]W = \sum_{p=1}^{r}P_g^{2k+p+1}(R)\ast \nabla^{(r-p)}W,
\end{equation}
 for all $r \geq 0$. Here $(\nabla)_{\beta}$ is shorthand for
$\nabla_{\beta_1}\nabla_{\beta_2}\cdots\nabla_{\beta_l}$.
\end{lemma}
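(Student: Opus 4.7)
The first identity \eqref{eq:commrel} is the classical iterated Ricci identity and makes no use of the flow. I would prove it by induction on $l+m$, with base case $l=m=1$ being the standard $[\nabla_i,\nabla_j]W = g^{-1} \ast \Rm \ast W$ (the curvature acting on each tensor slot of $W$). For the inductive step, I write $(\nabla)_\beta (\nabla)_\gamma W = \nabla_{\beta_1}\bigl((\nabla)_{\beta'}(\nabla)_\gamma W\bigr)$, where $\beta' = (\beta_2,\ldots,\beta_l)$, and apply $\nabla_{\beta_1}$ under the sum supplied by the inductive hypothesis, using the Leibniz rule $\nabla(\nabla^{(j)}\Rm \ast \nabla^{(p)} W) = \nabla^{(j+1)}\Rm \ast \nabla^{(p)} W + \nabla^{(j)}\Rm \ast \nabla^{(p+1)} W$; a re-indexing collects the terms into the claimed sum over $p$ from $0$ to $l+m-2$.

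The second identity \eqref{eq:dtdm} uses the structure of the flow \eqref{eq:kcf} and proceeds by induction on $r$. The case $r=0$ is vacuous. For $r=1$, the universal variation formula
\begin{equation*}
  \pdt \Gam_{ij}^k = \tfrac{1}{2} g^{kl}\bigl(\nabla_i (\pdt g)_{jl} + \nabla_j (\pdt g)_{il} - \nabla_l (\pdt g)_{ij}\bigr)
\end{equation*}
gives $[\pdt,\nabla] W = (\pdt \Gam) \ast W = g^{-1} \ast \nabla(\pdt g) \ast W$. Since $\pdt g = \Theta_{2k}(g) + \Lambda_{2k-1}(g)$ is, by construction, a polynomial in $g$, $g^{-1}$, and $\Rm, \nabla \Rm, \ldots, \nabla^{(2k)}\Rm$, its covariant derivative falls into the schematic class $P_g^{2k+1}(\Rm)$, which establishes the $p=1$ term. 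For $r \ge 2$ I would iterate via
\begin{equation*}
  [\pdt,\nabla^{(r)}] W = \nabla\bigl([\pdt,\nabla^{(r-1)}] W\bigr) + [\pdt,\nabla]\bigl(\nabla^{(r-1)} W\bigr);
\end{equation*}
the $r=1$ case handles the second term, and distributing $\nabla$ through the inductive expression for the first term via the Leibniz rule and $\nabla P_g^{l}(\Rm) = P_g^{l+1}(\Rm)$ produces, after relabeling, the desired sum $\sum_{p=1}^{r} P_g^{2k+p+1}(\Rm) \ast \nabla^{(r-p)} W$.

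The only genuine obstacle is the bookkeeping required to confirm that every term produced by the induction falls into the stated schematic form with the correct shift $2k+p+1$. This reduces to the fact that $\nabla$ raises the polynomial order of the class $P_g^{l}(\Rm)$ by exactly one, so the $2k$ derivatives of $\Rm$ already present in $\pdt g$ account precisely for the $2k+1$ offset on the right-hand side of \eqref{eq:dtdm}. No analytic input beyond the definition of the flow and standard tensor calculus is required.
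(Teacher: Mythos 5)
Your argument is correct and essentially reproduces the paper's proof: both identities are established by induction from the same base cases (the Ricci identity $[\nabla_i,\nabla_j]W = g^{-1}\ast R\ast W$ and $\pdt\Gamma = P_g^{2k+1}(R)$, the latter because $\nabla\pdt g$ involves $2k+1$ derivatives of $R$) via the same Leibniz-rule recursions, including the identity $[\pdt,\nabla^{(r+1)}]W = [\pdt,\nabla]\nabla^{(r)}W + \nabla[\pdt,\nabla^{(r)}]W$. The only difference is cosmetic --- you run a single induction on $l+m$ for \eqref{eq:commrel} where the paper nests an induction on $m$ over one on $l$ --- though when you peel off $\nabla_{\beta_1}$ you should also account for the cross term $[\nabla_{\beta_1},(\nabla)_{\gamma}]\bigl((\nabla)_{\beta'}W\bigr)$, which your (strong) induction hypothesis on total length does cover.
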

\begin{proof}
 Trivially, we have $[\nabla_{\beta_1}, \nabla_{\gamma_1}]W = g^{-1}\ast R\ast W$,
 and so the special case of \eqref{eq:commrel} where $m=1$ then follows
by induction on $l$ from the identity
\[
  [\nabla_{\beta_1}\nabla_{\beta_2}\cdots\nabla_{\beta_{l+1}}, \nabla_{\gamma_1}]W
  = \nabla_{\beta_1}([\nabla_{\beta^{\prime}}, \nabla_{\gamma_{1}}]) 
+ [\nabla_{\beta_1}, \nabla_{\gamma_1}](\nabla)_{\beta^{\prime}}W,
\]
where $\beta^{\prime} = (\beta_2, \ldots, \beta_{l+1})$. The general case of \eqref{eq:commrel}
follows from this case from a separate induction argument on $m$ using
\begin{align*}
  [(\nabla)_{\beta}, \nabla_{\gamma_1}\nabla_{\gamma_2}\cdots \nabla_{\gamma_{m+1}}]W
& = [(\nabla)_{\beta}, \nabla_{\gamma_1}](\nabla)_{\gamma^{\prime}}W
  + \nabla_{\gamma_1}[(\nabla)_{\beta}, (\nabla)_{\gamma^{\prime}}]W
\end{align*}
where $\gamma^{\prime}= (\gamma_2, \ldots, \gamma_{m+1})$.

For \eqref{eq:dtdm}, we combine the general formula for the evolution of the Christoffel symbols with
the equation \eqref{eq:kcf}, to obtain
\[
  \pdt \Gamma_{ij}^k = \frac{1}{2}g^{mk}\left\{\nabla_{i}\pdt g_{jm} + \nabla_{j}\pdt g_{im} - \nabla_{m}\pdt g_{ij}\right\}
  = P^{2k+1}_g(R).
\]
Then we have 
\[
  \left[\pdt, \nabla\right] W = \pdt\Gamma \ast W  = P^{2k+1}_g(R) \ast W,
\]
from which \eqref{eq:dtdm} follows, using the identity
\[
  \left[\pdt, \nabla^{(r+1)}\right] W = \left[\pdt, \nabla\right]\nabla^{(r)}W + \nabla\left[\pdt, \nabla^{(r)}\right]W,
\]
and induction on $r$.
\end{proof}

Now we determine the structure of the evolution equations for the covariant derivatives of the 
curvature tensor. We only need to pay careful attention to the structure of the highest-order terms.
\begin{lemma}\label{lem:rev} Suppose $g(t)$ is a solution to \eqref{eq:kcf}. For any $m = 0, 1, 2, \ldots$,  the $m$-fold covariant derivative $\nabla^{(m)}R$ of $g$
evolves by the equation
\begin{equation}\label{eq:rev}
\pdt \nabla^{(m)}R = (-1)^{k}\left(\Delta^{(k+1)}\nabla^{(m)}R + \alpha(\nabla\nabla \Delta^{(k)}\nabla^{(m)}S)\odot g\right) + P_g^{2k+m +1}(R)
\end{equation}
and the $m$-fold covariant derivative of the scalar curvature satisfies an equation of the form
\begin{equation}\label{eq:sev}
 \pdt \nabla^{(m)}S  = (1 + 2\alpha(n-1))\Delta^{(k+1)}\nabla^{(m)}S + P_{g}^{2k + m + 1}(R)
\end{equation}
on $M\times [0, \Omega]$.
\end{lemma}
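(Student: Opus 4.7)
The plan is to derive the $m=0$ case of \eqref{eq:rev} by linearizing the curvature tensor under the flow, then propagate it to arbitrary $m$ by commuting $\nabla^{(m)}$ through the resulting equation with the help of Lemma \ref{lem:commrel}. I would start from the standard formula for the variation of the $(4,0)$-curvature under $\pdt g_{ij} = h_{ij}$,
\begin{equation*}
\pdt R_{ijkl} = \tfrac{1}{2}\bigl(\nabla_i\nabla_k h_{jl} + \nabla_j\nabla_l h_{ik} - \nabla_i\nabla_l h_{jk} - \nabla_j\nabla_k h_{il}\bigr) + R\ast h,
\end{equation*}
and substitute $h = \Theta_{2k} + \Lambda_{2k-1}$. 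The lower-order summand $\Lambda_{2k-1}$, being a polynomial in $\nabla^{(\le 2k-1)}R$, produces at most $2k+1$ covariant derivatives of $R$ after the two outer derivatives in the linearization formula, and the coupling $R\ast h$ produces at most $2k$ derivatives of $R$; both contributions are therefore absorbed into $P_g^{2k+1}(R)$.

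The core of the argument is to extract the leading linear piece from the $\Theta_{2k}$-contribution. Here I would use the contracted second Bianchi identity, combined with the commutator formula \eqref{eq:commrel}, to rewrite the symmetric combination of $\nabla\nabla\Delta^{(k)}\Rc$ terms as $\Delta^{(k+1)}R$ modulo a remainder polynomial in derivatives of $R$ of order at most $2k+1$. This is the same Bianchi reduction underlying the parabolicity of Ricci flow ($k=0$), extended across the extra $k$ Laplacians at the cost of further commutator terms that again land inside $P_g^{2k+1}(R)$. The $\alpha\Delta^{(k)}Sg$ summand of $\Theta_{2k}$ generates the canonical term $\alpha(\nabla\nabla\Delta^{(k)}S)\odot g$ after pulling the two outer derivatives into place. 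The remaining $\beta$-summand produces a term of the schematic form $\nabla\nabla\Delta^{(k-1)}\nabla\nabla S$, which further Bianchi and commutator manipulations reorganize into a combination of the two already-identified leading structures and an additional $P_g^{2k+1}(R)$ remainder. The overall coefficient $(-1)^k$ results from combining the $(-1)^{k+1}$ in $\Theta_{2k}$ with the sign produced by the Bianchi reduction. This establishes the $m=0$ case of \eqref{eq:rev}.

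To propagate to general $m$, I would apply $\nabla^{(m)}$ to both sides and use \eqref{eq:dtdm} to exchange $\nabla^{(m)}$ with $\pdt$; the resulting commutators have the schematic form $P_g^{2k+p+1}(R)\ast\nabla^{(m-p)}R$ for $1\le p\le m$, each of total derivative order $2k+m+1$ in $R$ and hence absorbed into $P_g^{2k+m+1}(R)$. Similarly, passing $\nabla^{(m)}$ through $\Delta^{(k+1)}$ and $\nabla\nabla\Delta^{(k)}$ via repeated application of \eqref{eq:commrel} yields only curvature-polynomial commutators of total derivative order at most $2k+m+1$. Equation \eqref{eq:sev} then follows from \eqref{eq:rev} by the metric trace: the double trace of $\Delta^{(k+1)}\nabla^{(m)}R$ gives $\Delta^{(k+1)}\nabla^{(m)}S$, while a short Kulkarni--Nomizu computation yields that the double trace of $(\nabla\nabla\Delta^{(k)}\nabla^{(m)}S)\odot g$ contributes $2(n-1)\Delta^{(k+1)}\nabla^{(m)}S$ modulo terms in $P_g^{2k+m+1}(R)$, producing the combined coefficient $1+2\alpha(n-1)$.

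The main obstacle is the careful bookkeeping of the Bianchi and commutator manipulations required to extract the leading piece from the $\Theta_{2k}$-contribution at $m=0$, especially the verification that the $\beta$-summand contributes nothing beyond the two already-identified leading terms modulo a controlled $P_g^{2k+1}(R)$ remainder. Because the lemma asks only for the identification of the leading linear operator plus a remainder of bounded derivative order, however, the entire computation can be carried out at the schematic level, and its verification reduces to counting the derivatives of $R$ appearing in each summand.
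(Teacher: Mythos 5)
Your overall route---linearize $R$ at $m=0$ via \eqref{eq:drm}, reduce the $\Delta^{(k)}\Rc$ and $\Delta^{(k)}Sg$ summands to $\Delta^{(k+1)}R$ and $\nabla\nabla\Delta^{(k)}S\odot g$ by Bianchi and commutator identities, and then propagate to general $m$ with \eqref{eq:commrel} and \eqref{eq:dtdm}---is exactly the paper's.

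The gap is the $\beta$-summand, which you flag as the main obstacle but do not resolve, and where your stated expectation is also wrong. The correct conclusion is that $DR_g[2\Delta^{(k-1)}\nabla\nabla S]$ contributes \emph{nothing} at leading order: its entire contribution is absorbed into the remainder (in fact into $P^{2k}_g(R)$). It cannot ``reorganize into a combination of the two already-identified leading structures'' with nonzero coefficients, since \eqref{eq:rev} carries no $\beta$-dependence in its principal part---and neither does the ellipticity condition $\alpha>-1/(2(n-1))$ that the whole uniqueness argument rests on. Moreover, this is not a matter of routine bookkeeping: the naive derivative count for $DR_g[\Delta^{(k-1)}\nabla\nabla S]$ gives $2k+2$ derivatives of $R$, one more than the remainder $P^{2k+1}_g(R)$ tolerates, so a genuine cancellation must be exhibited, and the Bianchi identity is not the mechanism. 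The paper's device is diffeomorphism invariance: by \eqref{eq:commrel}, $\Delta^{(k-1)}\nabla\nabla S=\nabla\nabla\Delta^{(k-1)}S+P^{2k-2}_g(R)$, and $2\nabla\nabla f=\mathcal{L}_{\nabla f}\,g$, so
\[
DR_g\bigl[2\Delta^{(k-1)}\nabla\nabla S\bigr]=\mathcal{L}_{\nabla\Delta^{(k-1)}S}R+DR_g\bigl[P^{2k-2}_g(R)\bigr]=P^{2k}_g(R).
\]
(Equivalently, one can verify by hand that the antisymmetrized fourth covariant derivatives of $\Delta^{(k-1)}S$ produced by \eqref{eq:drm} cancel modulo curvature terms.) Without this step, or its equivalent, the lemma is not proved.

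A minor consistency remark on your trace derivation of \eqref{eq:sev} (which the paper does not detail and which is a legitimate route): tracing \eqref{eq:rev} actually yields the coefficient $(-1)^k(1+2\alpha(n-1))$, which is what appears in \eqref{eq:zev} and what the sign of the good term in \eqref{eq:kder} requires; the factor $(-1)^k$ seems to have been dropped in the statement of \eqref{eq:sev}, so matching that literal coefficient should not be your target.
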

Here, for $m=0$, 
\begin{align*}
\begin{split}
  (\nabla\nabla\Delta^{(k)}S\odot g)_{abcd} &= \nabla_a\nabla_d\Delta^{(k)}S g_{bc} 
  + \nabla_b\nabla_c\Delta^{(k)}S g_{ad} - \nabla_a\nabla_c\Delta^{(k)}S g_{bd} \\
  &\phantom{=}- \nabla_b\nabla_d\Delta^{(k)}S g_{ac}
\end{split}
\end{align*}
is the Kulkarni-Nomizu product of $\nabla\nabla\Delta^{(k)}S$ and $g$, and, for $m\geq 1$,
we define
\begin{align*}
 (\nabla\nabla \Delta^{(k)}\nabla^{(m)}S\odot g)_{ab\gamma cd}
&\dfn \nabla_a\nabla_d \Delta^{(k)}(\nabla)_{\gamma}S g_{bc}
+\nabla_b\nabla_c \Delta^{(k)}(\nabla)_{\gamma}S g_{ad}\\
&\phantom{\dfn}-\nabla_a\nabla_c \Delta^{(k)}(\nabla)_{\gamma}S g_{bd}
-\nabla_b\nabla_d \Delta^{(k)}(\nabla)_{\gamma}S g_{ac}
\end{align*}
for any multi-index $\gamma$ of length $m$. 

\begin{proof} The proof is a standard computation, some details of which we include for completeness. We start with the case $m=0$
and the formula 
\begin{equation}\label{eq:drm}
 DR_g[h]_{ijkl} = -\frac{1}{2}\left(\nabla_i\nabla_lh_{jk} + \nabla_j\nabla_kh_{il} - \nabla_i\nabla_kh_{jl} - \nabla_j\nabla_lh_{ik}\right) +  (R\ast_g h)_{ijkl}.
\end{equation}
for the linearization $DR_g$ of $R = \Rm$ at the metric $g$ in the direction of a given symmetric $(2, 0)$-tensor $h$. The Bianchi identities imply that
\[
 DR_g(2\Rc) = -\Delta R + P_g^0(R),   
\]
so, using \eqref{eq:commrel}, we have
\begin{align*}
  DR_g(2\Delta^{(k)}\Rc) &= - \Delta^{(k+1)}R  + P^{2k}_g(R) + \left[DR_g, \Delta^{(k)}\right]\Rc \\
			&= -\Delta^{(k+1)}R + P^{2k}_g(R).
\end{align*}
Similarly, using \eqref{eq:drm}, we see that the second term in $\Theta_{2k}(g)$ contributes terms of the form
\[
  DR_g(2\Delta^{(k)}S g) = -\nabla\nabla\Delta^{(k)}S\odot g + P^{2k}_g(R)
\]
to the evolution of $R$.  

The third term in $\Theta_{2k}(g)$, however, contributes only lower order terms.  To see this, note that 
the diffeomorphism invariance of $R$ implies that $DR_g[\mathcal{L}_Xg] = \mathcal{L}_XR_g$ for any vector field $X$. Since
\[
\Delta^{(k-1)}\nabla\nabla S = \nabla\nabla \Delta^{(k-1)}S + P^{2k-2}_g(R)
\]
by \eqref{eq:commrel}, it follows that
\[
 DR_g[2\Delta^{(k-1)}\nabla\nabla S] = DR_g[\mathcal{L}_{\nabla \Delta^{(k-1)}S} g + P^{2k-2}_g(R)] = P^{2k}_g(R).
\]

Together with the observation that $DR_g[\Lambda_{2k-1}(g)] = P^{2k+1}_g(R)$, we arrive at \eqref{eq:rev} in the 
case $m= 0$.  The identities for $m > 0$ follow from this case and the commutation
relations \eqref{eq:commrel} and \eqref{eq:dtdm}.
\end{proof}

We will also need to express the the difference of $m$-fold covariant derivatives relative to $g$ and $\gt$
in terms of the tensor $A$ and its derivatives.
\begin{lemma}
  Suppose $\nabla$ and $\nabt$ are two connections on $TM$ and $W$ is a smooth tensor field. Define $A = \nabla - \nabt$. Then, for any $m \geq 0$,
\begin{equation}\label{eq:conndiff}
 \nabla^{(m)}W - \nabt^{(m)}W = \sum_{p=1}^{m} Q^{p}(A)\ast \nabt^{(m-p)}W
\end{equation}
where $Q^p(A)$ represents some linear combination of natural contractions 
  of tensors of the form $\nabla^{(m_1)}A\otimes\nabla^{(m_2)}A \otimes \cdots \otimes \nabla^{(m_k)}A$
where $m_1 + m_2 + \ldots + m_k + k = p$.
\end{lemma}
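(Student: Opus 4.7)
The plan is a straightforward induction on $m$, using the basic relation $\nabla = \nabt + A$ when acting on tensors, together with careful bookkeeping of the weight $p = m_1 + \cdots + m_k + k$ that each monomial in $Q^p(A)$ carries. The base case $m=1$ reduces to the identity $\nabla W - \nabt W = A\ast W$, which is essentially the definition of the difference tensor $A$ and fits the target form with the single factor $A$ of derivative order $0$, contributing weight $0 + 1 = 1$.

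For the inductive step, assuming the identity at level $m$, I would split
\[
\nabla^{(m+1)}W - \nabt^{(m+1)}W = \bigl[\nabla - \nabt\bigr]\nabla^{(m)}W + \nabt\bigl[\nabla^{(m)}W - \nabt^{(m)}W\bigr].
\]
The first summand equals $A\ast \nabla^{(m)}W$, and after using the inductive hypothesis to rewrite $\nabla^{(m)}W$ in terms of $\nabt^{(j)}W$ and $Q^p(A)$-factors, it produces a contribution $A\ast \nabt^{(m)}W$ (of the form $Q^1(A)\ast\nabt^{(m)}W$) together with terms $A\ast Q^p(A)\ast \nabt^{(m-p)}W$; each of the latter has the form $Q^{p+1}(A)\ast\nabt^{(m+1-(p+1))}W$, since adjoining one factor of $A$ of derivative order $0$ raises $k$ by one while keeping $\sum m_j$ fixed, thereby raising $p$ by one. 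The second summand, via the Leibniz rule applied to the inductive hypothesis, yields terms of the form $Q^p(A)\ast\nabt^{(m+1-p)}W$ together with $(\nabt Q^p(A))\ast \nabt^{(m-p)}W$. For the latter I would apply $\nabt = \nabla - A$ to each factor $\nabla^{(m_i)}A$ inside $Q^p(A)$, producing either $\nabla^{(m_i+1)}A$ or $A\ast \nabla^{(m_i)}A$, each of which raises the total weight by exactly one, so $\nabt Q^p(A)$ is schematically of the form $Q^{p+1}(A)$.

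The only real subtlety is notational: verifying at each step that the symbolic products still represent natural contractions of tensors $\nabla^{(m_1)}A\otimes\cdots\otimes\nabla^{(m_k)}A$ with the claimed total weight. Once this weight-preservation bookkeeping is checked, all contributions from both summands fall into the desired form $Q^q(A)\ast\nabt^{(m+1-q)}W$ with $1\leq q\leq m+1$, and collecting them closes the induction.
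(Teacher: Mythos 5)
Your proposal is correct and follows essentially the same induction as the paper: the paper splits the difference as $\nabla(\nabla^{(m)}W-\nabt^{(m)}W)+(\nabla-\nabt)\nabt^{(m)}W$ while you use the mirror-image split $(\nabla-\nabt)\nabla^{(m)}W+\nabt(\nabla^{(m)}W-\nabt^{(m)}W)$, but the telescoping idea and the weight bookkeeping are identical. Your extra step of converting $\nabt Q^p(A)$ back to $\nabla$-derivatives via $\nabt=\nabla-A$ is handled correctly and plays the same role as the paper's conversion of $\nabla\nabt^{(m-p)}W$ into $\nabt^{(m-p+1)}W+A\ast\nabt^{(m-p)}W$.
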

\begin{proof} The proof is by induction.  For $m =1$, we have $\nabla W - \nabt W = A\ast W$. Supposing the claim to be true
for some $m \geq 1$, we can verify it for $m+1$ by observing that
\begin{align*}
&  \nabla^{(m+1)}W - \nabt^{(m+1)}W 
      = \nabla(\nabla^{(m)}W - \nabt^{(m)}W) + (\nabla-\nabt)\nabt^{(m)}W\\
      &\quad= \nabla\left(\sum_{p=1}^{m} Q^{p}(A)\ast \nabt^{(m-p)}W\right) + A\ast\nabt^{(m)}W\\
\begin{split}
      &\quad= \sum_{p=1}^{m}\left(\nabla Q^{p}(A)\ast \nabt^{(m-p)}W + Q^{p}(A)\ast A\ast \nabt^{(m-p)}W + Q^{p}(A)\ast \nabt^{(m-p+1)}W\right) \\
      &\quad\phantom{=}+ Q^1(A)\ast\nabt^{(m)}W
\end{split}\\
      &\quad= \sum_{p=1}^{m}\left(Q^{p+1}(A)\ast \nabt^{(m-p)}W + Q^{p}(A)\ast \nabt^{(m-p+1)}W\right) + Q^1(A)\ast\nabt^{(m+1 -1)}W\\
      &\quad= \sum_{p=1}^{m+1} Q^{p}(A)\ast \nabt^{(m+1-p)}W, 
\end{align*}
which is of the desired form.
\end{proof}

\subsection{Evolution equations for $h$, $\nabla^{(l)}A$, and $X^{(l)}$}
The equations satisfied by the elements of our system are rather involved, but, as before, we will only need to keep a careful
account of the highest order terms. For our purposes, it will be sufficient to verify that each of the lower order terms are polynomial in form
and contain one factor
of some element of our system. Since $M$ is compact and our solutions are smooth,
the remaining factors will be uniformly bounded. 

To simplify our presentation, we will use $C$ to denote any smooth uniformly bounded tensor whose internal structure is immaterial to our argument. 
This tensor will
vary in rank and composition from line-to-line (and even term-to-term) in our expressions.  

\begin{lemma} Suppose that $g$ and $\gt$ are solutions to \eqref{eq:kcf} on $M\times [0, \Omega]$.  Then, for any $l\geq 0$, 
$h$ and $\nabla^{(l)}A$ satisfy the evolution equations
\begin{align}
\label{eq:hev}
  \pdt h &= C \ast h + \sum_{p=0}^{2k}C\ast X^{(p)}, \\
\label{eq:aev}
  \pdt \nabla^{(l)}A &= C \ast h + \sum_{p=0}^{l}C\ast\nabla^{(p)}A + 
\sum_{p=0}^{l+1}\sum_{q=0}^{2k}C\ast \nabla^{(p)}X^{(q)},
\end{align}
and 
$X^{(l)}$ and $Z^{(l)}$ satisfy
\begin{align}
\begin{split}
\label{eq:xev}
  \pdt X^{(l)} &=  (-1)^{k}\left(\Delta^{(k+1)}X^{(l)} + \alpha(\nabla\nabla \Delta^{(k)}Z^{(l)})\odot g\right)\\
		&\phantom{=}   + C\ast h + \sum_{p=0}^{m(k, l)}C\ast \nabla^{(p)}A 
+ \sum_{p=0}^{l+1}\sum_{q=0}^{2k}C\ast \nabla^{(p)}X^{(q)},
\end{split}\\
\begin{split}
\label{eq:zev}
  \pdt Z^{(l)} &= (-1)^k(1 + 2\alpha(n-1))\Delta^{(k+1)}Z^{(l)} + C\ast h \\
  &\phantom{=} + \sum_{p=0}^{m(k, l)}C\ast \nabla^{(p)}A + \sum_{p=0}^{l+1}\sum_{q=0}^{2k}C\ast\nabla^{(p)}X^{(q)},   
\end{split}
\end{align}
where $m(k, l) \dfn \max\{2k+1, l\}$.

\end{lemma}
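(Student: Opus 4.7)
The plan is to treat each equation by writing the time derivative using either the flow equation \eqref{eq:kcf} directly (for $h$) or the metric-specific evolution equations \eqref{eq:rev}--\eqref{eq:sev} (for $X^{(l)}$ and $Z^{(l)}$), and then to convert every $\nabt$-derivative or $\gt$-contraction into its $\nabla$- or $g$-counterpart modulo corrections expressible in terms of the prolonged variables $h$, $\nabla^{(p)}A$, and $\nabla^{(p)}X^{(q)}$. Three substitution rules do all of the work: (i) equation \eqref{eq:conndiff}, which rewrites $\nabla^{(m)}W - \nabt^{(m)}W$ as a polynomial in $A$-derivatives applied to $\nabt$-derivatives of $W$; (ii) the tautology $\nabla h = -A\ast \gt$, obtained from $\nabla g \equiv 0$, which lets us trade any $\nabla$-derivative of $h$ for a factor of $A$; and (iii) $g^{-1}-\gt^{-1} = -g^{-1}\ast \gt^{-1}\ast h$, so every metric-contraction difference contributes a factor of $h$. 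Compactness of $M$ together with smoothness of $g$ and $\gt$ renders all background curvature factors uniformly bounded, and we absorb these into the generic symbol $C$.

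For \eqref{eq:hev}, write $\partial_t h = \Theta_{2k}(g) + \Lambda_{2k-1}(g) - \Theta_{2k}(\gt) - \Lambda_{2k-1}(\gt)$. Each of these is a tensor polynomial in $g$, $g^{-1}$ and $\nabla^{(p)}R$ for $p\le 2k$, so the difference telescopes by replacing one factor at a time: $g$-differences produce $h$, $g^{-1}$-differences produce $C\ast h$, and the differences $\nabla^{(p)}R-\nabt^{(p)}\Rt$ are by definition $X^{(p)}$. For \eqref{eq:aev}, begin from $\partial_t\Gamma_{ij}^k = \tfrac{1}{2}g^{mk}(\nabla_i\partial_t g_{jm}+\nabla_j\partial_t g_{im}-\nabla_m\partial_t g_{ij})$ and its tilde analogue, and subtract. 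The $g^{-1}$-factor differences contribute $C\ast h$; the difference $\nabla(\partial_t g)-\nabt(\partial_t\gt)$ splits as $(\nabla-\nabt)(\partial_t\gt) + \nabla(\partial_t g-\partial_t\gt)$, the first summand giving $C\ast A$ and the second, by \eqref{eq:hev} combined with $\nabla h=C\ast A$, producing $C\ast A + \sum_{q=0}^{2k} C\ast\nabla X^{(q)}$. This settles the $l=0$ case; the general case follows by applying $\nabla^{(l)}$, commuting with $\partial_t$ via \eqref{eq:dtdm}, and using Leibniz together with repeated applications of $\nabla h=C\ast A$ to replace any stray $\nabla^{(p+1)}h$ by $\nabla^{(p)}A$.

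For \eqref{eq:xev} and \eqref{eq:zev}, apply Lemma \ref{lem:rev} to both metrics and subtract. The principal part becomes $\Delta^{(k+1)}X^{(l)} + (\Delta^{(k+1)}-\Deltat^{(k+1)})\nabt^{(l)}\Rt$; expand the commutator recursively via $\Delta-\Deltat = (g^{-1}-\gt^{-1})\nabla^2 + \gt^{-1}(\nabla^2-\nabt^2)$, using \eqref{eq:conndiff} at each step to convert any trailing $\nabt^{(m)}\Rt$ factors into $\nabla^{(m)}\Rt$ plus lower-order $A$-corrections, then writing $\Rt = R - X^{(0)}$. The scalar-curvature term in \eqref{eq:rev} produces the $(\nabla\nabla\Delta^{(k)}Z^{(l)})\odot g$ piece by exactly the same reasoning, and the remainder $P_g^{2k+l+1}(R) - P_{\gt}^{2k+l+1}(\Rt)$ telescopes as in the argument for \eqref{eq:hev}. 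Equation \eqref{eq:zev} is obtained in parallel using \eqref{eq:sev}.

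The main obstacle, and essentially the only nontrivial bookkeeping, is verifying that after the $k+1$ iterations used to expand $(\Delta^{(k+1)}-\Deltat^{(k+1)})\nabt^{(l)}\Rt$, no derivative of $A$ beyond order $m(k,l)=\max\{2k+1,l\}$ and no derivative of $X^{(q)}$ beyond $\nabla^{(l+1)}X^{(2k)}$ survives. This reflects the fact that each $A$-factor produced by an expansion step absorbs at least one of the $2k+l+2$ derivatives that would otherwise have acted on $\Rt$, so the maximum surviving order of any $A$-factor is bounded by the total number of derivatives minus the lowest residual order applied to $\Rt$.
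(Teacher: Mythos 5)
Your proposal is correct and follows essentially the same route as the paper: subtract the two evolution equations, and convert all $\nabt$-derivatives and $\gt$-contractions into $g$-quantities using \eqref{eq:conndiff}, $\nabla h = A\ast\gt$, and $g^{-1}-\gt^{-1}=-g^{-1}\ast\gt^{-1}\ast h$, absorbing bounded background factors into $C$. The only cosmetic difference is that the paper leaves residual factors such as $\nabt^{(m)}\Rt$ inside $C$ directly rather than rewriting $\Rt = R - X^{(0)}$, but the bookkeeping and the resulting orders $m(k,l)$ and $\nabla^{(l+1)}X^{(2k)}$ match.
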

\begin{proof} Recall that $R$ denotes the $(4, 0)$-curvature tensor, so, schematically, both 
$\Delta^{(l)}\Rc$ and $\nabla\nabla \Delta^{(l-1)}S$ are of the form $(g^{-1})^{*(l+1)}\ast \nabla^{(2l)}R$,
and
$\Delta^{(l)}S g$ is of the form $(g^{-1})^{*(l+2)}\ast\nabla^{(2l)}R \ast g$.
So, using that $g^{ij} - \gt^{ij} = -g^{ik}\gt^{jl}h_{kl}$, we have 
\begin{align*}
 \Theta_{2k}(g) - \Theta_{2k}(\gt) &=(g^{-1})^{*(k+1)}\ast \nabla^{(2k)}R - (\gt^{-1})^{*(k+1)}\ast \nabt^{(2k)}\Rt\\
&\phantom{=} + (g^{-1})^{*(k+2)}\ast\nabla^{(2k)}R \ast g -  (\gt^{-1})^{*(k+2)}\ast\nabt^{(2k)}\Rt \ast \gt\\
&= C\ast h  + C\ast X^{(2k)}.
\end{align*}
Since $\Lambda_{2k-1}(g)$ is of a polynomial structure in $g, g^{-1}, R, \ldots, \nabla^{(2k-1)}R$, we have
\[
  \Lambda_{2k-1}(g) - \Lambda_{2k-1}(\gt) = C\ast h + C\ast X^{(0)} + C\ast X^{(1)}
+ \cdots + C\ast X^{(2k-1)},
\]
and \eqref{eq:hev} follows.

Next, since $\pdt \Gamma = g^{-1}\ast\nabla \pdt g$, we compute that
\begin{align*}
  \pdt A &= g^{-1}\ast\nabla \pdt g - \gt^{-1}\ast\nabt \pdt \gt = C\ast h + C\ast A + C\ast\nabla\pdt h\\
	       &= C\ast h + C\ast A + \sum_{p=0}^{2k}\nabla(C\ast X^{(p)}), 
\end{align*}
where we have used \eqref{eq:hev}, together with the identity $\nabla_k h_{ij} = A_{ki}^p\gt_{pj} + A_{kj}^p\gt_{ip}$.
With the commutator identity \eqref{eq:dtdm}, we then see that
\begin{align*}
  \pdt \nabla^{(l)}A &= \left[\pdt, \nabla^{(l)}\right]A + \nabla^{(l)}\pdt A\\
      &= \sum_{p=0}^{l-1} C\ast \nabla^{(p)}A + \nabla^{(l)}\left(C\ast h + C\ast A + \sum_{q=0}^{2k}\nabla(C\ast X^{(q)}) \right)\\
      &= C\ast h + \sum_{p=0}^{l}C\ast\nabla^{(p)}A + \sum_{p=0}^{l+1}\sum_{q=0}^{2k}C\ast\nabla^{(p)} X^{(q)}, 
\end{align*}
which is \eqref{eq:aev}.

For \eqref{eq:xev}, we consider the difference of the highest-order terms in \eqref{eq:rev} first.
By \eqref{eq:conndiff}, $\nabla^{(l)}W - \nabt^{(l)}W = \sum_{p=0}^{l-1}C\ast \nabla^{(p)}A$ for any tensor $W$ (where $C$ depends on $W$), so
\begin{align*}
 &\Delta^{(k+1)}\nabla^{(l)}R - \Deltat^{(k+1)}\nabt^{(l)}\Rt
  = C\ast h + \sum_{p=0}^{2k+1}C\ast\nabla^{(p)}A + \Delta^{(k+1)}X^{(l)},
\end{align*}
and, similarly,
\begin{align*}
\begin{split}
 \nabla\nabla \Delta^{(k)} \nabla^{(l)} S\odot g  -\nabt\nabt \Deltat^{(k)} \nabt^{(l)} \St\odot \gt
 &= \nabla\nabla \Delta^{(k)} Z^{(l)}\odot g + C\ast h \\
 &\phantom{=}+ \sum_{p=0}^{2k+1}C\ast\nabla^{(p)}A.  
\end{split}
\end{align*}
For the difference of the lower-order terms in \eqref{eq:rev}, we note that
\begin{align*}
&  P_g^{2k+l+1}(R) - P_{\gt}^{2k+l+1}(\Rt)= C\ast h + \sum_{q=0}^{2k+l+1}C\ast X^{(q)}\\
&\quad= C\ast h + \sum_{q=0}^{2k}C\ast X^{(q)} + \sum_{q=1}^{l+1}(C\ast\nabla^{(q)} X^{(2k)} + C\ast\nabla^{(q-1)}A),
\end{align*}
where we have used \eqref{eq:conndiff} to obtain the second line.
Combined with our earlier expressions for the higher order terms, we obtain \eqref{eq:xev}.
Equation \eqref{eq:zev} follows similarly, using \eqref{eq:sev}. 
\end{proof}

\subsection{$L^2$-inequalities.}

For the proof of Theorem \ref{thm:energyder}, we will need the following simple versions of G\aa{}rding's inequality and the Gagliardo-Nirenberg inequality.

\begin{lemma} For any integers $a$, $b$, $l\geq 0$ there exists $C_1 > 0$
depending on these parameters, $n$, and the the covariant derivatives of $R$ up to order $l-1$, such that
  \begin{align}\label{eq:gaarding}
\left|(-1)^{l}\left(\Delta^{(l)} W, W\right) - \|\nabla^{(l)}W\|^2\right| \leq C_1\sum_{p=0}^{l-1}\|\nabla^{(p)}W\|^2.
  \end{align}
for any section $W \in C^{\infty}(T^a_b(M))$.
Moreover, for any $0 \leq l < k$, and any $\epsilon > 0$, there exists a constant $C_2 = C_2(\epsilon, k, l, n)$ such that
\begin{equation}\label{eq:gn}
  \|\nabla^{(l)} W\|^2 \leq C_2\| W\|^2 + \epsilon \|\nabla^{(k)}W\|^2.
\end{equation}
\end{lemma}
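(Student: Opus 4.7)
The plan is to prove the two inequalities separately, both via integration by parts together with the curvature commutation identity \eqref{eq:commrel} from Lemma \ref{lem:commrel}. Since $M$ is compact and no boundary terms appear, integration by parts is clean throughout.

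For the G\aa{}rding estimate \eqref{eq:gaarding}, I would proceed by induction on $l$. The base case $l=0$ is trivial, and $l=1$ is a single integration by parts: $(\Delta W, W) = -(\nabla W, \nabla W) = -\|\nabla W\|^2$, with no error. For the inductive step, I would write
\[
(-1)^l(\Delta^{(l)}W, W) = (-1)^{l-1}(\Delta^{(l-1)}\nabla W, \nabla W) + (-1)^{l-1}\bigl([\nabla, \Delta^{(l-1)}]W, \nabla W\bigr),
\]
obtained by integrating one gradient by parts and then commuting $\nabla$ past $\Delta^{(l-1)}$. The inductive hypothesis, applied to $\nabla W$ in place of $W$, identifies the first term with $\|\nabla^{(l)}W\|^2$ up to an admissible error $\sum_{p=0}^{l-2}\|\nabla^{(p)}\nabla W\|^2 = \sum_{q=1}^{l-1}\|\nabla^{(q)}W\|^2$. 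The commutator term is handled by invoking \eqref{eq:commrel} to write $[\nabla, \Delta^{(l-1)}]W$ as a sum of expressions of the form $g^{-1}\ast \nabla^{(a)}R \ast \nabla^{(b)}W$ with $a+b \le 2l-3$; after repeated integration by parts to balance the number of derivatives falling on each side and Cauchy--Schwarz, these contribute only to $\sum_{p=0}^{l-1}\|\nabla^{(p)}W\|^2$, with constants depending on the supremum norms of $R, \nabla R, \ldots, \nabla^{(l-1)}R$.

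For the Gagliardo--Nirenberg-type estimate \eqref{eq:gn}, the key is the one-step interpolation
\[
\|\nabla^{(l)}W\|^2 \leq \delta\,\|\nabla^{(l+1)}W\|^2 + C(\delta)\,\|\nabla^{(l-1)}W\|^2
\]
valid for any $\delta > 0$ and $l \geq 1$. To prove it, set $U = \nabla^{(l-1)}W$ and integrate by parts:
\[
\|\nabla^{(l)}W\|^2 = (\nabla U, \nabla U) = -(\Delta U, U) \leq \|\Delta U\|\,\|U\| \leq \bigl(\|\nabla^{(l+1)}W\| + C\sum_{p\le l-1}\|\nabla^{(p)}W\|\bigr)\|\nabla^{(l-1)}W\|,
\]
where I have used \eqref{eq:commrel} to commute the Laplacian out through the $l-1$ covariant derivatives, producing lower-order curvature corrections. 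Young's inequality then yields the one-step bound (absorbing lower-order terms using the induction). Finally, iterating this one-step inequality $k-l$ times, at each stage choosing $\delta$ small enough that the sum of constants at the top level totals $\epsilon$, produces \eqref{eq:gn}.

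The only real obstacle is bookkeeping: ensuring that when commutators from \eqref{eq:commrel} are integrated by parts so as to distribute $l$ derivatives evenly on each side, none of the resulting expressions involves more than $l-1$ derivatives falling on a single factor of $W$. This is automatic since the commutators $[\nabla, \Delta^{(j)}]W$ are intrinsically of order at most $2j$, strictly below $2l$; once this is noted, both inequalities reduce to straightforward applications of integration by parts, Cauchy--Schwarz, and Young's inequality, with all undetermined constants absorbing factors of $\sup_M|\nabla^{(p)}R|$ for $p \le l-1$.
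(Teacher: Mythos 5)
Your argument is correct and follows essentially the same route as the paper: induction plus integration by parts and the commutation identity \eqref{eq:commrel} for \eqref{eq:gaarding}, and the one-step interpolation $\|\nabla^{(l)}W\|^2\le\delta\|\nabla^{(l+1)}W\|^2+C(\delta)\|\nabla^{(l-1)}W\|^2$ for \eqref{eq:gn}, which the paper organizes as an induction on $k$ rather than an iteration. (One small simplification: in the one-step bound you need not commute the Laplacian at all, since $\Delta\nabla^{(l-1)}W$ is already a trace of $\nabla^{(l+1)}W$, so $\|\Delta U\|\le \sqrt{n}\,\|\nabla^{(l+1)}W\|$ directly.)
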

\begin{proof}
  The proof of \eqref{eq:gaarding} is essentially standard. For $l=1$, one has $-(\Delta W, W) = \|\nabla W\|^2$.
Proceeding by induction, and assuming, that, for some $l > 1$, the inequality to holds for $l-1$,
one may integrate by parts and use \eqref{eq:commrel} to
obtain
\begin{align*}
  &(-1)^{l}\left(\Delta^{(l)}W, W\right) 
%&= (-1)^{l-1}(\nabla_i\Delta^{(l-1)} W, \nabla_i W)\\
  = (-1)^{l-1}(\Delta^{(l-1)}\nabla_i W, \nabla_i W) + (-1)^{l-1}([\nabla_i, \Delta^{(l-1)}]W, \nabla_i W) \\
  &\quad\quad=  (-1)^{l-1}(\Delta^{(l-1)}\nabla W, \nabla W) + \sum_{p + q = 2l-3}(C\ast\nabla^{(p)}R\ast\nabla^{(q)}W,\nabla W) 
\end{align*}
Since the total sum of the orders of the derivatives in each term on the right is $2l-2$,
one can integrate by parts to achieve that no derivative of order greater than $l-1$ appears
on any one factor of $R$ or $W$. Using the induction hypothesis on the first term establishes the inequality for $l$.

Inequality \eqref{eq:gn} is also standard. Let $\epsilon > 0$ be given. The case $k =1$, $l=0$ is trivial.  
Proceeding by induction on $k$, and assuming the inequality to hold for all $0\leq l < k-1$ where $k > 1$, we let $0 \leq l < k$.
Integrating by parts, we obtain 
\[
  \|\nabla^{(k-1)}W\|^2 \leq a\|\nabla^{(k-2)}W\|\|\nabla^{(k)}W\|
\]
for some $a = a(n)$ and so, using Cauchy-Schwarz and the induction hypothesis,
\begin{align*}
  \|\nabla^{(k-1)}W\|^2 &\leq \frac{a^2}{2\epsilon}\|\nabla^{(k-2)}W\|^2 + \frac{\epsilon}{2}\|\nabla^{(k)}W\|^2\\
			&\leq C\|W\|^2 + \frac{1}{2}\|\nabla^{(k-1)}W\|^2 + \frac{\epsilon}{2}\|\nabla^{(k)}W\|^2
\end{align*}
for some $C = C(\epsilon, k, n)$.
Hence
\begin{equation}\label{eq:km1}
  \|\nabla^{(k-1)}W\|^2 \leq  C\|W\|^2 + \epsilon \|\nabla^{(k)}W\|^2.
\end{equation}
This handles the case $l= k-1$.  If $0 \leq l < k-1$, we can use the induction hypothesis again and \eqref{eq:km1} to obtain
\[
  \|\nabla^{(l)}W\|^2 \leq C\|W\|^2 + \|\nabla^{(k-1)}W\|^2 \leq C\|W\|^2 + \epsilon \|\nabla^{(k)}W\|^2.
\]   
\end{proof}

In the next two lemmas, we use the inequality \eqref{eq:gn} to obtain interpolation inequalities for the elements of our system.
\begin{lemma}\label{lem:ainterpolation}
Let $h$ and  $A$ be as defined above. Then, for $0 \leq l \leq k$, there exists
a constant $C$ depending on $k$, $l$, $g$ and $\gt$ such that
\begin{align}\label{eq:ainterpolation}
 \|\nabla^{(l)}A\|^2 \leq C\|h\|^2 + C\|\nabla^{(k)}A\|^2.
\end{align}

\end{lemma}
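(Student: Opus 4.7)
The plan is to combine the algebraic relationship between $A$ and $\nabla h$ with two alternating applications of the Gagliardo--Nirenberg inequality \eqref{eq:gn}. Since $\nabt\gt = 0$, the identity $\nabla_k h_{ij} = A_{ki}^{p}\gt_{pj} + A_{kj}^{p}\gt_{ip}$ already used in the proof of \eqref{eq:aev} can be inverted by cyclic symmetrization to express $A$ algebraically as a bounded contraction of $\nabla h$ with $\gt^{-1}$. In particular $\|A\|^2 \leq C\|\nabla h\|^2$. Differentiating this relation and noting that $\nabla\gt = -\nabla h$ is itself of the form $C\ast A$ (so that each $\nabla^{(j)}\gt$ is a smooth, uniformly bounded polynomial in $A, \nabla A, \ldots, \nabla^{(j-1)}A$) yields the dual bound
\[
  \|\nabla^{(k+1)} h\|^2 \leq C\sum_{p=0}^{k}\|\nabla^{(p)} A\|^2,
\]
where the constant depends only on the (smooth, hence uniformly bounded) solutions $g$ and $\gt$.

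The base case $l=0$ of \eqref{eq:ainterpolation} then follows by a short bootstrap. Applying \eqref{eq:gn} first to $h$ with top order $k+1$ gives, for any $\epsilon > 0$,
\[
  \|\nabla h\|^2 \leq C_\epsilon\|h\|^2 + \epsilon\|\nabla^{(k+1)} h\|^2,
\]
and a second application of \eqref{eq:gn} to $A$ with top order $k$, used on each of the intermediate terms $\|\nabla^{(p)} A\|^2$ for $0 \leq p < k$, converts this into
\[
  \|\nabla h\|^2 \leq C_\epsilon\|h\|^2 + \epsilon C\bigl(\|A\|^2 + \|\nabla^{(k)} A\|^2\bigr).
\]
Combined with $\|A\|^2 \leq C\|\nabla h\|^2$ and a choice of $\epsilon$ small enough to absorb the $\|A\|^2$ term into the left-hand side, this yields $\|A\|^2 \leq C\bigl(\|h\|^2 + \|\nabla^{(k)} A\|^2\bigr)$. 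For $0 < l < k$, a single further application of \eqref{eq:gn} to $A$ produces $\|\nabla^{(l)} A\|^2 \leq C\|A\|^2 + \|\nabla^{(k)} A\|^2$, and \eqref{eq:ainterpolation} follows.

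The only subtlety is the circular dependence between $A$ and $\nabla h$: the raw algebraic identities relate each to one more derivative of the other, so neither quantity can be controlled by the lower-order data without an input from the top-order term. This is precisely what makes the absorption step necessary, and the freedom in \eqref{eq:gn} to place an arbitrarily small coefficient in front of $\|\nabla^{(k)} W\|^2$ is exactly what allows the argument to close.
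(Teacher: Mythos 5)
Your proof is correct and follows essentially the same route as the paper's: both rest on the identities $\nabla h = A\ast\gt$ and $A = \gt^{-1}\ast\nabla h$, the resulting bound $\|\nabla^{(k+1)}h\|^2 \leq C(\|A\|^2 + \|\nabla^{(k)}A\|^2)$, and an absorption argument powered by the small parameter in \eqref{eq:gn}. The only (harmless) difference is that you interpolate $\|\nabla h\|^2$ directly between $\|h\|^2$ and $\|\nabla^{(k+1)}h\|^2$, whereas the paper first integrates by parts in $\|A\|^2 = (A, \gt^{-1}\ast\nabla h)$ to produce $\|h\|\,\|\nabla^{(2)}h\|$ and then interpolates $\|\nabla^{(2)}h\|^2$; both close the same loop.
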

\begin{proof}
Inequality \eqref{eq:ainterpolation} will follow immediately from \eqref{eq:gn} once we prove 
an inequality of the form  $\|A\|^2 \leq C\|h\|^2 + C\|\nabla^{(k)}A\|^2$.  For this, we use
the identities
\begin{equation}\label{eq:ahrel}
 \nabla h = -\nabla \gt = A\ast \gt, \quad A = \gt^{-1}\ast \nabla h 
\end{equation}
Differentiating the first of these identities $k$ times and using \eqref{eq:gn} yields a constant $C^{\prime}$
such that
\[
   \|\nabla^{(k+1)}h\|^2 \leq C^{\prime}(\|A\|^2 + \|\nabla^{(k)}A\|^2).
\]
Using this together with \eqref{eq:gn} again, we obtain a  constant $C^{\prime\prime}$ such that
\[
 \|\nabla^{(2)}h\|^2 \leq C^{\prime\prime}\left(\|h\|^2 + \|A\|^2 + \|\nabla^{(k)}A\|^2\right).
\]
Hence, using the identities in \eqref{eq:ahrel} and integrating by parts, we obtain
\begin{align*}
  \|A\|^2 &\leq C^{\prime\prime\prime}(\|h\|\|\nabla^{(2)}h\| +\|h\|\|A\|)\\
  &\leq (C^{\prime\prime\prime})^2\left(\frac{C^{\prime\prime}}{2} + 1\right)\|h\|^2 
      + \frac{1}{2C^{\prime\prime}}\|\nabla^{(2)}h\|^2 + \frac{1}{4}\|A\|^2\\
  &\leq C\|h\|^2 + \frac{3}{4}\|A\|^2 + \frac{1}{2}\|\nabla^{(k)}A\|^2
\end{align*}
so $\|A\|^2 \leq C\|h\|^2 + C\|\nabla^{(k)}A\|^2$ as desired.
\end{proof}

\begin{lemma}\label{lem:xzinterpolation}
 Let $X^{(q)}$ and $Z^{(q)}$ be as defined above. For any nonnegative integers $p$, $q$ with $0 \leq p \leq k$ and  
$0 \leq q \leq 2k$, there exists a 
constant $C$ depending on those parameters and the solutions $g$ and $\gt$ such that
\begin{align}
   \label{eq:xinterpolation}
   \|\nabla^{(p)}X^{(q)}\|^2 &\leq C\left(\|h\|^2 + \|\nabla^{(k)}A\|^2 + \|X^{(0)}\|^2 + \|\nabla^{(k)}X^{(2k)}\|^2\right) \\  
   \label{eq:zinterpolation1}
   \|\nabla^{(p)}Z^{(q)}\|^2 &\leq C\left(\|h\|^2 + \|\nabla^{(k)}A\|^2 + \|Z^{(0)}\|^2 + \|\nabla^{(k)}Z^{(2k)}\|^2\right) \\
\label{eq:zinterpolation2}
			     &\leq C\left(\|h\|^2 + \|\nabla^{(k)}A\|^2 + \|X^{(0)}\|^2 + \|\nabla^{(k)}X^{(2k)}\|^2\right)
\end{align}
\end{lemma}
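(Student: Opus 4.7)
The plan is to reduce \eqref{eq:xinterpolation} to bounding $M_r := \|X^{(r)}\|^2$ for $0 \leq r \leq 3k$ and to close this via a finite Gagliardo--Nirenberg system anchored at the four ``nodes'' $T_j := M_{jk}$ ($j=0,1,2,3$). The identity \eqref{eq:conndiff} applied with $W = \nabt^{(q)}\Rt$ yields
\[
\nabla^{(p)} X^{(q)} = X^{(p+q)} - \sum_{j=1}^{p} Q^j(A) \ast \nabt^{(p+q-j)}\Rt,
\]
and for $p \leq k$ every $Q^j(A)$ appearing involves only $\nabla^{(l)} A$ with $l \leq k-1$; pairing these in $L^2$ against the $L^\infty$-bounded $\nabt^{(p+q-j)}\Rt$ and invoking Lemma~\ref{lem:ainterpolation} bounds the sum by $C E_A$, with $E_A := \|h\|^2 + \|\nabla^{(k)} A\|^2$. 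Read either way, this shows $\|\nabla^{(p)} X^{(q)}\|^2 = M_{p+q} + O(E_A)$; in particular $T_0 = \|X^{(0)}\|^2$ is given and $(p,q) = (k,2k)$ produces $T_3 \leq 2\|\nabla^{(k)} X^{(2k)}\|^2 + CE_A$.

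To bound the interior nodes $T_1, T_2$, I apply \eqref{eq:gn} to $W = X^{(q)}$ with top $K = k+1$:
\[
\|\nabla^{(k)} X^{(q)}\|^2 \leq C_\epsilon\|X^{(q)}\|^2 + \epsilon\|\nabla^{(k+1)} X^{(q)}\|^2.
\]
The identity with $p = k+1$ still only invokes $A$-derivatives of order $\leq k$, so converting both sides to $M_\bullet$-terms yields the inter-band inequality
\[
M_{q+k} \leq C_\epsilon M_q + C\epsilon M_{q+k+1} + CE_A \quad (q \geq 0).
\]
Separately, applying \eqref{eq:gn} to $X^{(q)}$ with top $K = k$ and the same conversion gives the intra-band estimate $M_r \leq C(M_q + M_{q+k} + E_A)$ for $q \leq r \leq q+k$. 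Using the inter-band inequality at $q = 0$ and $q = k$, and controlling the resulting $M_{k+1}, M_{2k+1}$ by intra-band estimates from the bands $[k, 2k]$ and $[2k, 3k]$, the $T_{j+1}$-contributions on the right may be absorbed into the left for small $\epsilon$ to produce the closed system
\[
T_1 \leq C T_0 + C\epsilon T_2 + CE_A, \qquad T_2 \leq C T_1 + C\epsilon T_3 + CE_A.
\]

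The main obstacle is then solving this $2 \times 2$ system while carrying the small parameter $\epsilon$: substituting the second inequality into the first gives $(1 - C^2\epsilon)T_1 \leq C T_0 + C^2 \epsilon^2 T_3 + CE_A$, so for $\epsilon$ chosen small relative to the fixed GN constants the coefficient on the left is strictly positive, yielding the desired bound on $T_1$ and hence on $T_2$. The intra-band estimates propagate these to all $M_r$ with $0 \leq r \leq 3k$, establishing \eqref{eq:xinterpolation}.

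The inequality \eqref{eq:zinterpolation1} follows by the exact same argument applied to $Z^{(q)}$ in place of $X^{(q)}$: only the identity \eqref{eq:conndiff} (now applied with $W = \nabt^{(q)}\St$) and the Gagliardo--Nirenberg inequality \eqref{eq:gn} enter, and both transfer verbatim. Finally, \eqref{eq:zinterpolation2} follows from \eqref{eq:zinterpolation1} after observing that $S = g^{ij}g^{kl}R_{ikjl}$ is a $g$-trace of $R$; expanding $Z^{(0)} = g \ast g \ast X^{(0)} + (g\ast g - \gt\ast\gt)\ast\Rt$ gives $\|Z^{(0)}\|^2 \leq C(\|X^{(0)}\|^2 + \|h\|^2)$, and a parallel expansion of $\nabla^{(k)} Z^{(2k)}$ as a $g$-trace of $\nabla^{(k)} X^{(2k)}$ modulo terms linear in $h$ or $\nabla^{(l)} A$ ($l \leq k$) gives $\|\nabla^{(k)} Z^{(2k)}\|^2 \leq C(\|\nabla^{(k)} X^{(2k)}\|^2 + E_A)$, so the RHS of \eqref{eq:zinterpolation1} is dominated by that of \eqref{eq:zinterpolation2}.
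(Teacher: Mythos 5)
Your proposal is correct and rests on exactly the same ingredients as the paper's proof: the conversion identity \eqref{eq:conndiff} to pass between $\nabla^{(p)}X^{(q)}$ and $X^{(p+q)}$ modulo $A$-terms controlled by Lemma \ref{lem:ainterpolation}, the interpolation inequality \eqref{eq:gn}, and a small-$\epsilon$ absorption to break the circularity between the intermediate quantities, so this is essentially the same approach. Your band/node bookkeeping (closing a $2\times 2$ system in $T_1,T_2$ and then propagating to all $M_r$, $0\leq r\leq 3k$) is a tidier repackaging of the paper's sequential chain $\|\nabla^{(k)}X^{(0)}\|\to\|X^{(k)}\|\to\|X^{(2k)}\|$ followed by its explicit case analysis over the ranges of $(p,q)$, but it is not a genuinely different argument.
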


\begin{proof}
For \eqref{eq:xinterpolation}, we first integrate by parts and use \eqref{eq:conndiff} to obtain	
\begin{align*}
  &\|\nabla^{(k)}X^{(0)}\|^2  = (C\ast\nabla^{(2k)}X^{(0)}, X^{(0)})
  =(C\ast X^{(2k)} + C\ast (\nabt^{(2k)}-\nabla^{(2k)})\Rt, X^{(0)})\\
&\quad\quad = \left(C\ast X^{(2k)}, X^{(0)}\right) + \left(\sum_{r=0}^{2k-1}C\ast\nabla^{(r)}A, X^{(0)}\right)\\
&\quad\quad = \left(C\ast X^{(2k)}, X^{(0)}\right) + \left(\sum_{r=0}^{k}C\ast\nabla^{(r)}A, X^{(0)}\right)
      + \left(\nabla^{(k)}A, \sum_{r=1}^{k-1} C\ast \nabla^{(r)}X^{(0)}\right)
\end{align*} 
which, using \eqref{eq:gn}, implies that for all $\epsilon > 0$, there is a $C = C(\epsilon)$
such that
\begin{equation}\label{eq:pureder}
    \|\nabla^{(k)}X^{(0)}\|^2 \leq  C(\|h\|^2 + \|\nabla^{(k)}A\|^2 + \|X^{(0)}\|^2) + \epsilon \|X^{(2k)}\|^2.
\end{equation}
Using $X^{(k)} = \nabla^{(k)}X^{(0)} + (\nabla^{(k)} -\nabt^{(k)})\Rt$ with \eqref{eq:gn}, \eqref{eq:ainterpolation}, and \eqref{eq:pureder}, we obtain
\begin{align}\nonumber
  \|X^{(k)}\|^2 &\leq C(\|\nabla^{(k)}X^{(0)}\|^2 + \|h\|^2 + \|\nabla^{(k)}A\|^2) \\
  \label{eq:xk}
&\leq C(\|h\|^2 + \|\nabla^{(k)}A\|^2 + \|X^{0}\|^2) + \epsilon \|X^{(2k)}\|^2,
\end{align}
and, in the same way, using $X^{(2k)} = \nabla^{(k)}X^{(k)} + (\nabla^{(k)} -\nabt^{(k)})\nabt^{(k)}\Rt$, we obtain
\begin{align*}
  \|X^{(2k)}\|^2  &\leq \|X^{(k)}\|\|\nabla^{(k)}X^{(2k)}\| + C\sum_{p=0}^{k}\|\nabla^{(p)}A\|\|X^{(2k)}\|\\  
  &\leq C(\|h\|^2 + \|\nabla^{(k)}A\|^2 + \|X^{(k)}\|^2 + \|\nabla^{(k)}X^{(2k)}\|^2).
\end{align*}
Thus, choosing $\epsilon$ in \eqref{eq:xk} sufficiently small, we obtain 
\begin{equation}\label{eq:x2k}
 \|X^{(2k)}\|^2 \leq C(\|h\|^2 + \|\nabla^{(k)}A\|^2 + \|X^{(0)}\|^2 + \|\nabla^{(k)}X^{(2k)}\|^2).
\end{equation}

In combination with  \eqref{eq:gn}, inequalities \eqref{eq:pureder}, \eqref{eq:xk}, and \eqref{eq:x2k} establish the estimate \eqref{eq:xinterpolation} 
in the case $p=0$, $q=k$, the case $p=0$, $q=2k$, and the case $0 \leq p \leq k$, $q=0$. We consider the remaining cases in turn.
First, if $p=0$, and $0 \leq q \leq k$, we have
\[
  X^{(q)} = \nabla^{(q)}X^{(0)} + \sum_{r=0}^{q-1}C\ast\nabla^{(r)}A,
\]
from which the desired inequality is obtained as a consequence of \eqref{eq:gn}, \eqref{eq:ainterpolation}, and the cases considered earlier.
Second, if $p=0$, and $k < q \leq 2k$,
then
\begin{equation}\label{eq:bigq}
  X^{(q)} = \nabla^{(q-k)}X^{(k)} + \sum_{r=0}^{q-k - 1}C\ast\nabla^{(r)}A.
\end{equation}
The second term can be controlled by \eqref{eq:ainterpolation},
and, working as above, we obtain 
\begin{align*}
\|\nabla^{(q-k)}X^{(k)}\|^2 &\leq C\left(\|X^{(k)}\|^2 + \|\nabla^{(k)}X^{(k)}\|^2\right)\\
   &\leq C(\left(\|X^{(k)}\|^2 + \|X^{(2k)} + (\nabt^{(k)} -\nabla^{(k)})\nabt^{(k)}\Rt\|^2\right)\\
  &\leq C\left(\|h\|^2 + \|\nabla^{(k)} A\|^2 + \|X^{(k)}\|^2 + \|X^{(2k)}\|^2\right)
\end{align*}
from which we see that the first term on the right of \eqref{eq:bigq} can also be estimated by cases already considered, 
Thus the desired inequality follows for this range of $p$ and $q$ as well.

It remains only to consider the case that $0 < p \leq k$, $0< q\leq 2k$.  When $q = 2k$ and $p = k$,
the desired inequality is trivial, and the case when $p < k$ and $q=2k$ follows from that of $p=0$, $q=2k$ 
using \eqref{eq:gn}. The subcase  $0 < p  \leq k $, $q < 2k$, and $p + q > 2k$ may be reduced to the case  $p < k$ and $q=2k$ by means
of the identity
\[
  \nabla^{(p)}X^{(q)} = \nabla^{(p+q - 2k)}X^{(2k)} + \sum_{r=0}^{p-1}C\ast\nabla^{(r)}A,
\]
and the remaining case, $0 < p \leq k$, $q < 2k$, and $p + q \leq 2k$ may be reduced to the case $p=0$ and $q \leq 2k$ 
by means of the identity
\[
 \nabla^{(p)}X^{(q)} = X^{(p+q)} + \sum_{r=0}^{p- 1}C\ast\nabla^{(r)}A.
\]

The inequality \eqref{eq:zinterpolation1} may be obtained from the above by the same
argument, substituting $Z^{(q)}$ for $X^{(q)}$. Inequality \eqref{eq:zinterpolation2} follows
from \eqref{eq:zinterpolation1} in view of the estimate 
\[
\|\nabla^{(k+1)}Z^{(l)}\| \leq C(\|h\| + \|\nabla^{(k)}A\| + \|\nabla^{(k+1)}X^{(l)}\|),
\]
 which is a simple consequence of the
identity $Z^{(l)} = C\ast h + C\ast X^{(l)}$.

\end{proof}

\subsection{Proof of Theorem \ref{thm:energyder}}
Having collected the necessary evolution equations and inequalities, we are ready to estimate the derivatives of $\Gc$, $\Hc$ and $\Kc$.
We start with $\Gc$. 
\subsubsection{The derivative of $\Gc$.}
Since $M$ is compact, the effect of differentiating the norms $|\cdot| = |\cdot |_{g(t)}$
and measure $d\mu_{g(t)}$ will only be to generate contributions that are bounded above by multiples of the original quantity.
Using \eqref{eq:hev}, \eqref{eq:aev}, and Lemmas \ref{lem:ainterpolation} and \ref{lem:xzinterpolation}
 in conjunction with \eqref{eq:gn},
we have
\begin{align*}
\begin{split}
  \dot{\Gc} &\leq C\Gc + 2\left(\pdt h, h\right) + \left(\pdt\nabla^{(k)}A, \nabla^{(k)}A\right)\\
  &\leq C\Gc + \left(C \ast h + \sum_{p=0}^{2k}C\ast X^{(p)}, h\right) \\
&\phantom{\leq}
+ \left(C \ast h + \sum_{p=0}^{k}C\ast\nabla^{(p)}A + \sum_{p=0}^{k+1}\sum_{q=0}^{2k}C\ast \nabla^{(p)}X^{(q)}, \nabla^{(k)}A\right)\\
  &\leq C(\epsilon)(\Gc + \Hc) + 2\epsilon\|\nabla^{(k+1)}X^{(2k)}\|^2 
\end{split}
\end{align*}
for any $\epsilon > 0$. This is \eqref{eq:gder}.

\subsubsection{The derivative of $\Hc$ when $\alpha = 0$.}
We next compute the derivative of $\Hc$, starting with the simpler case that $\alpha = 0$.
Using \eqref{eq:xev} and \eqref{eq:gaarding}, and integrating by parts, we have that, for any $0\leq l \leq 2k$,
\begin{align*}
\begin{split}
 &\frac{d}{dt}\|X^{(l)}\|^2 \leq C\|X^{(l)}\|^2 + 2(-1)^k(\Delta^{(k+1)}X^{(l)}, X^{(l)}) \\
  &\phantom{\frac{d}{dt}\|X^{(l)}\|^2\leq}
  + \left(C\ast h + \sum_{p=0}^{2k+1}C\ast \nabla^{(p)}A + \sum_{p=0}^{l+1}\sum_{q=0}^{2k}C\ast\nabla^{(p)}X^{(q)}, X^{(l)}\right)
\end{split}\\
\begin{split}
 &\quad\leq  -2\|\nabla^{(k+1)}X^{(l)}\|^2 + C\sum_{p=0}^{k}\|\nabla^{(p)}X^{(l)}\|^2  
+ C\left(\|h\|+ \sum_{p=0}^{k}\|\nabla^{(p)}A\|\right)\|X^{(l)}\| \\
  &\quad\phantom{\leq}+ C\sum_{p=0}^{k+1}\|\nabla^{(k)}A\|\|\nabla^{(p)}X^{(l)}\| 
+ C\sum_{p=0}^{k+1}\sum_{q=0}^{2k}\|\nabla^{(p)}X^{(q)}\|\| X^{(l)}\|\\
&\quad\phantom{\leq} +C\sum_{r=1}^{(l-k)_+}\sum_{q=0}^{2k}\|\nabla^{(k+1)}X^{(q)}\|\|\nabla^{r}X^{(l)}\|.
\end{split}\\
\end{align*}
Substituting $l=0$ and $l=2k$ in the above inequality, and using \eqref{eq:gn} and the interpolation inequalities in Lemmas \ref{lem:ainterpolation} and \ref{lem:xzinterpolation},
we obtain that, for any $\epsilon > 0$, there is a constant $C= C(\epsilon)$ such that
\begin{align*}
 \begin{split}
  \dot{\Hc} \leq C(\Gc + \Hc)  - 2(1-\epsilon)\|\nabla^{(k+1)}X^{(2k)}\|^2.
 \end{split}
\end{align*}
This is \eqref{eq:hder} in the case $\alpha = 0$.  The computation for the formula \eqref{eq:kder}
is entirely analogous, using \eqref{eq:zinterpolation2} to bound $\Kc$ and all positive terms involving $\|\nabla^{(p)}Z^{(q)}\|$
in terms of $\Gc$, $\Hc$, and $\|\nabla^{(k+1)}X^{(2k)}\|$. In fact, the computation for $\dot{\Kc}$
is the same regardless of the value of $\alpha$, since the term of leading order in the evolution of $Z^{(2k)}$ is always only
a multiple of $\Delta^{(k+1)}Z^{(2k)}$.

\subsubsection{The derivative of $\Hc$ when $\alpha \neq 0$.}
For the case that $\alpha\neq 0$, we need an estimate on the second leading order term in \eqref{eq:xev}.
We will prove the following bound.
\begin{proposition}\label{prop:scalarop}
For any $\alpha$ and any $\epsilon > 0$, there is a constant $C$ depending on $\alpha$, $\epsilon$,
and the solutions $g$ and $\gt$ such that
\begin{align}\label{eq:alphaterm2}
\begin{split}
    &2\alpha(-1)^{k}\left(\nabla\nabla \Delta^{(k)}Z^{(l)}\odot g, X^{(l)} \right)\\
&\quad \leq  -4\alpha\|\nabla^{(k+1)}Z^{(l)}\|^2 + \epsilon\|\nabla^{(k+1)}X^{(l)}\|^2 +  C(\Gc + \Hc)
\end{split}
\end{align}
for any $t\in [0, \Omega]$.
\end{proposition}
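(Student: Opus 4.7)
The plan is to reduce the given Kulkarni--Nomizu bilinear pairing to a Ricci-type pairing via a pointwise algebraic identity, then use the contracted second Bianchi identity to expose a $\nabla Z^{(l)}$ factor and G\aa{}rding's inequality \eqref{eq:gaarding} to isolate the leading term $-4\alpha\|\nabla^{(k+1)}Z^{(l)}\|^2$. Pointwise, for any symmetric $(0,2)$-tensor $T$ and any tensor with Riemann symmetries in its four indices,
\[
(T\odot g)_{abcd}\,R^{abcd} \;=\; c_0\,T_{bd}\bigl(g^{ac}R_{abcd}\bigr)
\]
for a constant $c_0=\pm 4$ fixed by the Ricci sign convention. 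Applied with $T=\nabla\nabla\Delta^{(k)}Z^{(l)}$ and $R = X^{(l)}$ (the multi-index $\gamma$ is carried along inertly) and combined with $\nabla g = 0$ and $g^{-1} = \tilde g^{-1} + C\ast h$, this gives
\[
g^{ac}X^{(l)}_{ab\gamma cd} \;=\; Y^{(l)}_{b\gamma d} + C\ast h,\qquad Y^{(l)} \dfn \nabla^{(l)}\Rc - \nabt^{(l)}\wt{\Rc},
\]
so the integrand splits as a ``$Y^{(l)}$-contribution'' plus an ``$h$-error.''

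For the $Y^{(l)}$-contribution, I would integrate by parts once to place a divergence on $Y^{(l)}$. Commuting this divergence past $\nabla^{(l)}_\gamma$ via \eqref{eq:commrel}, applying contracted Bianchi $\nabla^b\Rc_{bd} = \tfrac{1}{2}\nabla_d S$ on the $g$-side, and doing the analogous manipulation on the $\tilde g$-side using $\nabla = \nabt + A$ and $g^{-1} = \tilde g^{-1} + C\ast h$, produces
\[
\operatorname{div}(Y^{(l)}) \;=\; \tfrac{1}{2}\nabla Z^{(l)} + E,
\]
with $E$ of schematic form $C\ast h + C\ast A + C\ast X^{(\le l-1)}$ plus bounded pure-curvature commutator tensors. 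The principal part $\int\nabla\Delta^{(k)}Z^{(l)}\cdot\nabla Z^{(l)}\,d\mu$ reduces, after one further integration by parts and self-adjointness of $\Delta^{(k)}$, to $-\int Z^{(l)}\Delta^{(k+1)}Z^{(l)}\,d\mu$; by \eqref{eq:gaarding} this equals $(-1)^k\|\nabla^{(k+1)}Z^{(l)}\|^2$ modulo a remainder controlled by $\sum_{p\le k}\|\nabla^{(p)}Z^{(l)}\|^2 \le C(\Gc+\Hc)$ via \eqref{eq:zinterpolation2}. Gathering the factors $2\alpha(-1)^k$, $c_0$, $\tfrac{1}{2}$, and $(-1)^k$ produces the claimed $-4\alpha\|\nabla^{(k+1)}Z^{(l)}\|^2$.

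The two residual error integrals, the $h$-error from the decomposition of $g^{ac}X^{(l)}_{ab\gamma cd}$ and the $E$-contribution from the divergence, are handled by distributing the remaining $2k+2$ (respectively $2k+1$) derivatives evenly via further integrations by parts so that at most $k+1$ derivatives rest on either factor, and then applying Cauchy--Schwarz. Using $\nabla h = A\ast\tilde g$, Lemma \ref{lem:ainterpolation} bounds $\|\nabla^{(\le k+1)}h\|$ by $C\sqrt{\Gc}$; Lemma \ref{lem:xzinterpolation} together with \eqref{eq:gn} gives analogous control of $\|\nabla^{(\le k)}(C\ast A)\|$ and $\|\nabla^{(\le k)}(C\ast X^{(<l)})\|$ by $C\sqrt{\Gc+\Hc}$; and the identity $Z^{(l)} = C\ast h + C\ast X^{(l)}$ yields the crucial estimate
\[
\|\nabla^{(k+1)}Z^{(l)}\| \;\le\; C\bigl(\sqrt{\Gc} + \|\nabla^{(k+1)}X^{(l)}\|\bigr).
\]
Each product arising from Cauchy--Schwarz then has the form $(\sqrt{\Gc}+\|\nabla^{(k+1)}X^{(l)}\|)\cdot\sqrt{\Gc+\Hc}$, which a final AM--GM converts to $C(\epsilon)(\Gc+\Hc) + \epsilon\|\nabla^{(k+1)}X^{(l)}\|^2$.

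The main obstacle is to prevent any uncontrolled $\|\nabla^{(k+1)}Z^{(l)}\|^2$ term from slipping into the right-hand side, since any such term would perturb the exact leading coefficient $-4\alpha$ on which the delicate combination $\Ec = \Gc + \Hc + r\Kc$ of Corollary \ref{cor:eder} depends. The identity $Z^{(l)} = C\ast h + C\ast X^{(l)}$ is the essential device that converts any $\|\nabla^{(k+1)}Z^{(l)}\|$-factor in the errors into $\sqrt{\Gc} + \|\nabla^{(k+1)}X^{(l)}\|$, keeping the right-hand side within the stated budget.
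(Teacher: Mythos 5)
Your proposal is correct and follows essentially the same route as the paper's proof (Lemma \ref{lem:alphaterm} plus the interpolation inequalities): contract the Kulkarni--Nomizu product down to a Ricci-trace pairing with an overall factor of $4$ modulo $C\ast h$, integrate by parts and invoke the contracted second Bianchi identity to expose $\tfrac12 Z^{(l)}$ and hence the principal term $2(\Delta^{(k+1)}Z^{(l)},Z^{(l)})$, convert it via G\aa{}rding's inequality \eqref{eq:gaarding} to $-4\alpha\|\nabla^{(k+1)}Z^{(l)}\|^2$, and absorb the commutator and difference errors using the interpolation lemmas together with the identity $Z^{(l)}=C\ast h+C\ast X^{(l)}$. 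The only (immaterial) differences are that you stop at the singly-contracted Bianchi identity after one integration by parts where the paper uses $\nabla^a\nabla^d R_{ad}=\tfrac12\Delta S$ after two, and your error tensor $E$ should include $X^{(q)}$ up to $q=l$ rather than $l-1$, which changes nothing downstream.
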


Using this estimate and carrying over the computations from the case $\alpha =0$ 
for all of the other terms in $\dot{\Hc}$, we obtain \eqref{eq:hder} and \eqref{eq:kder} in the general case,
completing the proof of Theorem \ref{thm:energyder}.
The proof of Proposition \ref{prop:scalarop} is an easy consequence of the following estimate, proven below,
together with inequality \eqref{eq:gaarding}, and the interpolation inequalities \eqref{eq:gn}, \eqref{eq:ainterpolation},
\eqref{eq:xinterpolation}, and \eqref{eq:zinterpolation2}.
\begin{lemma}\label{lem:alphaterm} There is a constant $C$ such that
\begin{align}\label{eq:alphaterm}
\begin{split}
    &\left|\left(\nabla\nabla \Delta^{(k)}Z^{(l)}\odot g, X^{(l)} \right) - 2\left(\Delta^{(k+1)}Z^{(l)}, Z^{(l)}\right)\right| \\
  &\quad\quad \leq C\|\nabla^{(k+1)}Z^{(l)}\|\left(\|h\| + \sum_{p=0}^k\|\nabla^{(p)}A\| + \sum_{p=0}^{k}\sum_{q=1}^{l}\|\nabla^{(p)}X^{(q)}\|\right)
\end{split}
\end{align}
for any $t\in [0, \Omega]$.
\end{lemma}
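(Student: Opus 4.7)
My plan is to reduce the pairing to a form where the contracted second Bianchi identity applies, and then to track all error terms as products involving $h$, $A$, and their derivatives and those of the $X^{(q)}$.

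The first step is purely algebraic. Because $X^{(l)}$ inherits Riemann symmetries in its four curvature-type indices $abcd$, and because for any symmetric $(2,0)$-tensor $B$ and any Riemann-symmetric $R$, the identity $(B\odot g)_{abcd}R^{abcd} = 4 B^{ad}\operatorname{Ric}(R)_{ad}$ holds, I would collapse
\[
\left(\nabla\nabla\Delta^{(k)}Z^{(l)}\odot g, X^{(l)}\right) = 4\int \nabla_a\nabla_d\Delta^{(k)}Z^{(l)}_\gamma \cdot \widetilde{X}^{(l),a\gamma d}\,d\mu,
\]
where $\widetilde{X}^{(l)}_{a\gamma d} \dfn g^{bc}X^{(l)}_{ab\gamma cd}$ is the Ricci-like contraction on the middle curvature indices. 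The mild asymmetry of $\nabla\nabla\Delta^{(k)}Z^{(l)}_\gamma$ in $a,d$ produces only an $[\nabla_a,\nabla_d]$-commutator of schematic form $R*\nabla^{(\leq l)}S$, which is absorbed into the remainder. Integrating by parts twice moves $\nabla_a\nabla_d$ across:
\[
= 4\int \Delta^{(k)}Z^{(l)}_\gamma \cdot \nabla_a\nabla_d\widetilde{X}^{(l),a\gamma d}\,d\mu.
\]

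The second step is to identify the leading part of $\nabla_a\nabla_d\widetilde{X}^{(l),a\gamma d}$ using the contracted second Bianchi identity $\nabla^a R_{ad} = \tfrac{1}{2}\nabla_d S$. Commuting $\nabla^a$ past $(\nabla)_\gamma$ in the $R$-side of $X^{(l)} = \nabla^{(l)}R - \nabt^{(l)}\Rt$ produces $g^{bc}\nabla^a\nabla_\gamma R_{abcd} = \tfrac{1}{2}(\nabla)_\gamma\nabla_d S + P^{l}_g(R)$, and an analogous identity for the $\gt$-side under $\nabt$. To recombine these into a single expression for $\nabla^a\widetilde{X}^{(l)}_{a\gamma d}$ one must replace $\gt^{bc}$ by $g^{bc}$ (generating a term $(g^{bc}-\gt^{bc})\nabt^a\nabt_\gamma\Rt = C*h$) and $\nabt^a$ by $\nabla^a$ (generating $C*A$), and also use $\nabla_d(\nabla)_\gamma S - \nabt_d(\nabt)_\gamma\St = \nabla_dZ^{(l)}_\gamma + C*A*\nabt^{(l)}\St$ to turn the individual Hessians into a single $\nabla Z^{(l)}$. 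The telescoping difference $P^{l}_g(R) - P^{l}_{\gt}(\Rt)$ contributes a sum of terms of the form $C*X^{(q)}$ with $0\leq q\leq l-1$ together with $C*h$ and $C*\nabla^{(\leq l-1)}A$ contributions from comparing $g$ to $\gt$ and $\nabla$ to $\nabt$ within each polynomial. In total,
\[
\nabla^a\widetilde{X}^{(l)}_{a\gamma d} = \tfrac{1}{2}\nabla_dZ^{(l)}_\gamma + E_1,
\]
with $E_1$ a sum of contractions, with bounded coefficients, of $h$, $\nabla^{(\leq l)}A$, and $\nabla^{(\leq l)}X^{(q)}$ for $q \leq l-1$ (plus $h$, $A$, and $X^{(q)}$ themselves, unadorned). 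Applying the same argument to the second divergence yields $\nabla_a\nabla_d\widetilde{X}^{(l),a\gamma d} = \tfrac{1}{2}\Delta Z^{(l)}_\gamma + E_2$, with $E_2$ of the same form but one derivative higher.

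The third step assembles the estimate. Substituting back and using $\int\Delta^{(k)}Z^{(l)}\cdot\Delta Z^{(l)}\,d\mu = (\Delta^{(k+1)}Z^{(l)}, Z^{(l)})$ modulo lower-order commutators handled by \eqref{eq:commrel} and \eqref{eq:gaarding}, the main term is $2(\Delta^{(k+1)}Z^{(l)}, Z^{(l)})$. For the residue $4\int\Delta^{(k)}Z^{(l)}_\gamma\cdot E_2^\gamma\,d\mu$, I would integrate by parts $k$ times to symmetrize so that at most $k+1$ derivatives fall on $Z^{(l)}$ and at most $k$ on each factor in $E_2$. Cauchy-Schwarz then yields a bound of $C\|\nabla^{(k+1)}Z^{(l)}\|\cdot \|\nabla^{(\leq k)}E_2\|$. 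Since $E_2$ and all its derivatives up to order $k$ are polynomial in the tensors listed above with bounded coefficients, $\|\nabla^{(\leq k)}E_2\|$ is controlled by a sum of $\|h\|$, $\|\nabla^{(p)}A\|$ for $p\leq k$, and $\|\nabla^{(p)}X^{(q)}\|$ for $p\leq k$, $1\leq q\leq l$, once one rewrites $\nabla^{(p)}h$ using $\nabla h = A*\gt$ and absorbs remaining derivatives of $A$ via the interpolation Lemma \ref{lem:ainterpolation}.

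The principal difficulty lies in the bookkeeping of Step 2: each Bianchi contraction, each commutation of derivatives, and each swap between the $g$- and $\gt$-objects generates errors of a specific schematic form, and one must verify that every term produced lies in the narrow list allowed on the right-hand side of \eqref{eq:alphaterm} (in particular that no $X^{(q)}$ with $q > l$ or $X^{(0)} = R - \Rt$ paired with a high-order quantity arises, and that each high-order derivative of $A$ is no more than $l$-th order so that after a further integration by parts only $\nabla^{(\leq k)}A$ appears). No new analytic idea beyond the tools already assembled is required, only careful tracking.
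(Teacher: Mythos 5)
Your proposal follows essentially the same route as the paper: collapse the Kulkarni--Nomizu pairing to (four times) the Ricci-type contraction of $X^{(l)}$, integrate by parts twice and apply the contracted second Bianchi identity to each of the $g$- and $\gt$-sides to extract the leading term $\tfrac{1}{2}\Delta Z^{(l)}$ (hence $2(\Delta^{(k+1)}Z^{(l)},Z^{(l)})$ after a further integration by parts), and then redistribute derivatives so that at most $k+1$ land on $Z^{(l)}$ before applying Cauchy--Schwarz to the schematic error terms in $h$, $\nabla^{(p)}A$, and $\nabla^{(p)}X^{(q)}$. The bookkeeping you flag is exactly what the paper carries out, and your argument is correct.
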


\begin{proof}[Proof of Lemma \ref{lem:alphaterm}]
  First we note that
\begin{align} 
\begin{split}
 &      g^{ar}g^{bs}g^{ct}g^{du}g^{\gamma\delta}
      \left(\nabla_a\nabla_d \Delta^{(k)}Z^{(l)}_{\gamma}g_{bc}((\nabla)_{\delta}R_{rstu}
 - (\nabt)_{\delta}\tilde{R}_{rstu})\right)\\
&\quad= C\ast h + g^{ar}g^{du}g^{\gamma\delta}\nabla_a\nabla_d \Delta^{(k)}Z^{(l)}_{\gamma}((\nabla)_{\delta}R_{ru} -(\nabt)_{\delta}\tilde{R}_{ru})
\end{split}
\end{align}
where $\gamma$ and $\delta$ represent multi-indices of length $l$ and $g^{\gamma\delta}(\nabla)_{\gamma}V(\nabla)_{\delta}W$ is shorthand for
$g^{\gamma_1\delta_1}\cdots g^{\gamma_l\delta_l}\nabla_{\gamma_1}\cdots\nabla_{\gamma_l}V \otimes \nabla_{\delta_1}\cdots\nabla_{\delta_l}W$.
Hence,
\begin{align}
\begin{split}\label{eq:int0}
&\int_M\left\langle (\nabla\nabla \Delta^{(k)}Z^{(l)})\odot g , X^{(l)} \right\rangle\,d\mu \\
&\;
  = \int_M\left(\langle C \ast h, X^{(l)}\rangle 
+  4 g^{ar}g^{du}g^{\gamma\delta}\nabla_a\nabla_d \Delta^{(k)}Z^{(l)}_{\gamma}(\nabla_{\delta}R_{ru} -\nabt_{\delta}\tilde{R}_{ru})\right)\,d\mu
\end{split}
\end{align}
Now, integrating by parts and using the contracted second Bianchi identity, we simplify 
the second term on the right-hand side as follows:
\begin{align}\nonumber
 & \int_M g^{ar}g^{du}g^{\gamma\delta}\nabla_a\nabla_d \Delta^{(k)}Z^{(l)}_{\gamma}(\nabla_{\delta}R_{ru} -\nabt_{\delta}\tilde{R}_{ru})\,d\mu\\
\begin{split}\nonumber
 &\quad = \int_{M} g^{\gamma\delta}\bigg(\left(\Delta^{(k)}Z^{(l)}_{\gamma}(g^{ar}g^{du}\nabla_d\nabla_a\nabla_{\delta}R_{ru}
	-\gt^{ar}\gt^{du}\nabt_d\nabt_a\nabt_{\delta}\tilde{R}_{ru})\right) \\
&\quad\phantom{=} +\left\langle \Delta^{(k)}Z^{(l)}, C\ast h + C\ast A + C\ast \nabla A \right\rangle\bigg)\,d\mu
\end{split}\\
\begin{split}\label{eq:int1}
&\quad = \int_{M} \bigg(\frac{1}{2}\left\langle \Delta^{(k)}Z^{(l)},  \Delta Z^{(l)}\right\rangle \\
&\quad\phantom{=}
 +g^{\gamma\delta}\left\langle \Delta^{(k)}Z^{(l)}_{\gamma},
	   (g^{ar}g^{du}[\nabla_d\nabla_a, \nabla_{\delta}]R_{ru} 
- \gt^{ar}\gt^{du}[\nabt_d\nabt_a, \nabt_{\delta}]\Rt_{ru})\right\rangle \\	    
&\quad\phantom{=} + \frac{1}{2}g^{\gamma\delta}\left\langle \Delta^{(k)}Z^{(l)}_\gamma, ([\nabla_{\delta}, \Delta]S - 
[\nabt_{\delta}, \Deltat]\St)\right\rangle \\
&\quad\phantom{=}+\left\langle \Delta^{(k)}Z^{(l)}, C\ast h + C\ast A + C\ast \nabla A \right\rangle\bigg)\,d\mu.\\
\end{split}
\end{align}
Using \eqref{eq:commrel}, we see that $[\nabla^{(2)}, \nabla^{(l)}]R = P^{l}_g(R)$, so the commutator terms
 in the second line of \eqref{eq:int1} can be rewritten as
\begin{align*}
 &g^{ar}g^{du}[\nabla_d\nabla_a, \nabla_{\delta}]R_{ru} - \gt^{ar}\gt^{du}[\nabt_d\nabt_a, \nabt_{\delta}]\Rt_{ru}\\
&\qquad\qquad 
= P^{l}_g(R) - P_{\gt}^l(\Rt) = C\ast h + \sum_{p=0}^{l}C\ast X^{(p)}.
\end{align*}
The commutator terms in the third line are of the same schematic form and can be simplified in the same way.  

Thus, after integrating by parts in the first
line of \eqref{eq:int1} to move the Laplacian from the right side of the inner product to the left,
and further integrations-by-parts on the second, third, and fourth lines
 to move $(k-1)$-covariant derivatives from the factor of $\Delta^{(k)}Z$ to the opposite factor in the inner product,
we obtain
\begin{align*}
& \left|\int_M g^{ar}g^{du}g^{\gamma\delta}\nabla_a\nabla_d \Delta^{(k)}Z^{(l)}_{\gamma}(\nabla_{\delta}R_{ru} 
-\nabt_{\delta}\tilde{R}_{ru})\,d\mu
 - \frac{1}{2}\left(\Delta^{(k+1)}Z^{(l)}, Z^{(l)}\right)\right|\\
&\qquad\qquad \leq
C\|\nabla^{(k+1)}Z^{(l)}\|
\left(\|h\| + \sum_{p=0}^{k}\|\nabla^{(p)}A\| + \sum_{p=0}^k\sum_{q=0}^{l}\|\nabla^{(p)}X^{(q)}\|\right).
\end{align*}
Substituting this expression into \eqref{eq:int0}, we obtain \eqref{eq:alphaterm}.
\end{proof}

\section{A general uniqueness theorem}

As we remarked in the introduction, the method of the previous section does not depend on the specific structure
of \eqref{eq:kcf}, but rather of the structure of the inequalities satisfied
by the prolonged system derived from it. The uniqueness assertion for the prolonged system is essentially then
a consequence of the standard energy argument for strictly parabolic equations.
We formulate a somewhat more general version of the argument below which may be useful for other applications.

\subsection{Setup}
Let $M = M^n$ be a closed manifold equipped with a family of smooth metrics $g(t)$ for $t\in [0, \Omega]$,
and $\Xc$ and $\Yc$ be tensor bundles over $M$.
For simplicity of notation, we will regard $\Xc$ and $\Yc$ as orthogonal subbundles
of $\Wc \dfn \Xc\oplus \Yc$.
Denote by $(U, V)$ the family of $L^2(d\mu_{g(t)})$-inner products
\[
      (U, V) \dfn \int_M \langle U, V\rangle_{g(t)}\,d\mu_{g(t)}
\]
induced by $g(t)$ on $\Wc$ for $t\in [0, \Omega]$, with $\|U\|^2 \dfn (U, U)$. Below, we use 
$P = P(T_1, T_2, \ldots, T_{2k})$ and $Q = Q(T_1, T_2, \ldots, T_{k+1})$ to denote polynomial expressions in of their tensorial arguments, 
that is, finite linear combinations of contractions of tensor products of various subsets of their arguments with respect
to $g(t)$.
\begin{theorem}\label{thm:genversion} 
Suppose that $X = X(t)$ and $Y = Y(t)$ are smooth families of sections of $\Xc$ and $\Yc$
defined for $t\in [0, \Omega]$ which satisfy a system of the form
\begin{align*}
%\label{eq:xysys}
\begin{split}
    \pd{X}{t} - \Lc X &= P(X, \nabla X, \ldots, \nabla^{(2k-1)}X, Y)\\
    \pd{Y}{t}  &= Q(X, \nabla X, \ldots, \nabla^{(k)} X, Y),
\end{split}
\end{align*}
where $\Lc = \Lc(t): C^{\infty}(\Xc) \to C^{\infty}(\Xc)$ is a strongly elliptic 
linear operator of order $2k$ for some $k \geq 1$ with smoothly varying coefficients.
Then, $X(0) = 0$ and $Y(0) = 0$ imply $X(t)= 0$ and $Y(t) = 0$ for all $t\in [0, \Omega]$.
\end{theorem}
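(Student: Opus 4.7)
The plan is to adapt the $L^2$-energy argument used to prove Theorem \ref{thm:hocfuniqueness} to this abstract framework. Define
\[
  \Ec(t) \dfn \|X(t)\|^2 + \|Y(t)\|^2,
\]
and aim to show $\dot{\Ec}(t)\leq C\Ec(t)$ on $[0,\Omega]$. Since $X(0)=0$ and $Y(0)=0$, Gronwall's inequality will then force $\Ec\equiv 0$, hence $X\equiv 0$ and $Y\equiv 0$.

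Differentiating $\Ec$, the contributions from the time-variation of $g(t)$ and $d\mu_{g(t)}$ produce only terms bounded by $C\Ec$ on the compact manifold $M$, so we are reduced to
\[
  \dot{\Ec}(t) \leq C\Ec + 2(X, \Lc X) + 2(X, P) + 2(Y, Q).
\]
Strong ellipticity of $\Lc$ of order $2k$ yields, via a G\aa{}rding inequality of the type generalizing \eqref{eq:gaarding}, a constant $c>0$ such that
\[
  (X, \Lc X) \leq -c\|\nabla^{(k)}X\|^2 + C\|X\|^2.
\]
The heart of the argument is to bound $|(X, P)|$ and $|(Y, Q)|$ by $\epsilon\|\nabla^{(k)}X\|^2 + C_{\epsilon}\Ec$ so that the principal derivative term can be absorbed.

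The key point is that since $X$, $Y$, and $g(t)$ are smooth on the compact product $M\times [0,\Omega]$, every covariant derivative $\nabla^{(j)}X$, $\nabla^{(j)}Y$, and $\nabla^{(j)}g$ is bounded uniformly in $L^{\infty}$. Each monomial of $P$ can therefore be written as $\phi\ast T$, where $\phi$ is a uniformly bounded tensor field (with bounded derivatives of every order) and $T$ is a single distinguished factor drawn from $\{X, \nabla X, \ldots, \nabla^{(2k-1)}X, Y\}$. For $T=Y$ or $T=\nabla^{(j)}X$ with $0\leq j\leq k$, Cauchy-Schwarz followed by the interpolation inequality \eqref{eq:gn} directly gives $|(X,\phi\ast T)|\leq \epsilon\|\nabla^{(k)}X\|^2 + C_{\epsilon}\Ec$. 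For the more delicate case $T=\nabla^{(k+s)}X$ with $1\leq s\leq k-1$, I would integrate by parts $s$ times to shift the surplus derivatives onto the $X\phi$ factor; the product rule, combined with the boundedness of $\phi$ and its derivatives, produces terms of schematic form $\int \nabla^{(b)}X\ast\nabla^{(c)}\phi\ast \nabla^{(k)}X\,d\mu$ with $b+c=s$ and $b\leq k-1$, to each of which Cauchy-Schwarz and \eqref{eq:gn} again apply to give the same bound. Summing over the finitely many monomials of $P$ yields $|(X,P)|\leq \epsilon\|\nabla^{(k)}X\|^2 + C_{\epsilon}\Ec$. The parallel argument for $(Y,Q)$ is simpler, because the highest-order factor of $X$ appearing in $Q$ is $\nabla^{(k)}X$; the critical pairing $\|Y\|\|\nabla^{(k)}X\|$ is split by Young's inequality as $\epsilon\|\nabla^{(k)}X\|^2 + C_{\epsilon}\|Y\|^2$.

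Assembling these estimates produces
\[
  \dot{\Ec}(t) \leq (-2c + O(\epsilon))\|\nabla^{(k)}X\|^2 + C\Ec(t),
\]
and choosing $\epsilon$ sufficiently small yields $\dot{\Ec}\leq C\Ec$ on $[0,\Omega]$, closing the argument via Gronwall. The only real subtlety I anticipate is in the treatment of the high-derivative factors $\nabla^{(k+s)}X$ appearing in $P$: their $L^2$-norms are not controlled by $\|\nabla^{(k)}X\|^2$ and $\|X\|^2$ via interpolation alone, and it is essential to integrate by parts to redistribute the derivatives before applying Cauchy-Schwarz and \eqref{eq:gn}.
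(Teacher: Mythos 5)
Your proposal is correct and follows essentially the same route as the paper: the same energy $\Ec=\|X\|^2+\|Y\|^2$, G\aa{}rding's inequality for $\Lc$, integration by parts to redistribute the derivatives of order greater than $k$ in $P$, interpolation via \eqref{eq:gn}, and Gronwall. The only cosmetic difference is that the paper factors the argument through the slightly more general Theorem \ref{thm:genineqversion} (absorbing $P$ into a term $F$ satisfying \eqref{eq:fineq}), but the underlying estimates are identical to yours.
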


Above, the family of operators $\Lc(t)$ are assumed to be strongly elliptic for each $t$, relative to a modulus of ellipticity that is independent of time.
We will prove a slightly more general statement, which permits simpler choices of $X$ and $Y$ in some applications (see e.g., \cite{KotschwarRFU}).

\begin{theorem}\label{thm:genineqversion}
 Suppose that $X = X(t)$ and $Y= Y(t)$ are smooth families of sections of $\Xc$ and $\Yc$ defined
for $t\in [0, \Omega]$ which satisfy a system of the form
\begin{align}
\label{eq:xyineqsys}
\begin{split}
    \left\|\pd{X}{t} - \Lc X - F\right\|
      &\leq C\sum_{p=0}^{k}\|\nabla^{(p)}X\| + C\|Y\|,\\
  \left\|\pd{Y}{t}\right\|  &\leq C\sum_{p=0}^{k}\|\nabla^{(p)}X\| + C\|Y\|
\end{split}
\end{align}
on $M\times [0, \Omega]$
for some constant $C$,
where $\Lc = \Lc(t)$ is a strongly elliptic linear operator of order $2k$ for some $k \geq 1$ with smoothly varying coefficients, and $F$ is a family of sections of 
$\Xc$ satisfying
\begin{equation}\label{eq:fineq}
 (F, X) \leq C\left(\sum_{p=0}^{k}\|\nabla^{(p)}X\| + \|Y\|\right)
\left(\sum_{p=0}^{k-1}\|\nabla^{(p)}X\| + \|Y\|\right).
\end{equation}
Then, $X(0) = 0$ and $Y(0) = 0$ implies $X(t)= 0$ and $Y(t) = 0$ for all $t\in [0, \Omega]$.
\end{theorem}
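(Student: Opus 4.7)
The plan is a standard $L^2$ energy argument combined with Gronwall's inequality, applied to the natural quantity
\[
\Ec(t) \dfn \|X(t)\|^2 + \|Y(t)\|^2.
\]
Since $M$ is compact and $g(t)$ is smooth on $[0,\Omega]$, differentiating the $L^2$-inner product and the Riemannian measure contributes only a bounded multiple of $\Ec$. The essential idea is to exploit the ellipticity of $\Lc$ to produce a negative "good" term $-c_0\|\nabla^{(k)}X\|^2$, and then use the Gagliardo-Nirenberg inequality \eqref{eq:gn} together with Cauchy's inequality to absorb every other occurrence of derivatives of $X$ into this good term plus a multiple of $\Ec$. The conclusion $\Ec\equiv 0$ then follows from $\Ec(0)=0$ and Gronwall.

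First I would differentiate $\|X\|^2$. Writing $\pd{X}{t} = \Lc X + F + R_X$, where $\|R_X\|\leq C\sum_{p=0}^k\|\nabla^{(p)}X\|+C\|Y\|$ by \eqref{eq:xyineqsys}, the strong ellipticity of $\Lc$ (uniform in $t$) together with an integration by parts of G\aa{}rding type, analogous to \eqref{eq:gaarding}, yields a constant $c_0>0$ such that
\[
2(X,\Lc X) \leq -2c_0\|\nabla^{(k)}X\|^2 + C\sum_{p=0}^{k-1}\|\nabla^{(p)}X\|^2.
\]
The $F$-contribution is controlled using \eqref{eq:fineq} and Cauchy's inequality $ab\leq \epsilon a^2 + (4\epsilon)^{-1}b^2$:
\[
2(F,X) \leq 2\epsilon\|\nabla^{(k)}X\|^2 + C(\epsilon)\Bigl(\sum_{p=0}^{k-1}\|\nabla^{(p)}X\|^2+\|Y\|^2\Bigr),
\]
and a similar treatment applies to $2(X,R_X)$. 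Analogously, differentiating $\|Y\|^2$ and applying Cauchy-Schwarz to $2(Y,\pd{Y}{t})$ together with the second inequality of \eqref{eq:xyineqsys} produces a bound of the same shape. Combining these estimates yields, for any $\epsilon>0$,
\[
\dot{\Ec}(t) \leq (-2c_0+C\epsilon)\|\nabla^{(k)}X\|^2 + C(\epsilon)\Bigl(\sum_{p=0}^{k-1}\|\nabla^{(p)}X\|^2+\|X\|^2+\|Y\|^2\Bigr).
\]

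Now I would apply the Gagliardo-Nirenberg inequality \eqref{eq:gn} to each $\|\nabla^{(p)}X\|^2$ with $0\leq p<k$, bounding it by $C(\delta)\|X\|^2+\delta\|\nabla^{(k)}X\|^2$ for arbitrary $\delta>0$. Choosing $\epsilon$ and $\delta$ small enough to make the coefficient of $\|\nabla^{(k)}X\|^2$ nonpositive, one obtains $\dot{\Ec}(t)\leq C\Ec(t)$ on $[0,\Omega]$. Gronwall's inequality together with $\Ec(0)=0$ then forces $\Ec\equiv 0$, which gives $X(t)=0$ and $Y(t)=0$ for all $t\in[0,\Omega]$.

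The step that most needs care -- and that dictates the precise form of the hypothesis -- is the estimate for $(F,X)$. The asymmetric structure of \eqref{eq:fineq}, in which only one of the two factors reaches $\|\nabla^{(k)}X\|$ while the other stops at $\|\nabla^{(k-1)}X\|$, is what ensures the top-order quantity $\|\nabla^{(k)}X\|$ appears only linearly (never squared with an uncontrolled coefficient) in the final bound. This is precisely what allows it to be absorbed into the ellipticity term after applying Cauchy's inequality with small weight. If the hypothesis instead permitted $\|\nabla^{(k)}X\|^2$ to appear in $(F,X)$ with an unconstrained constant, the whole absorption argument would collapse; with it, the remainder of the proof is essentially the classical parabolic energy argument.
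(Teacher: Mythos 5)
Your proposal is correct and follows essentially the same route as the paper: the energy $\Ec=\|X\|^2+\|Y\|^2$, G\aa{}rding's inequality to extract the good term $-c_0\|\nabla^{(k)}X\|^2$, Cauchy's inequality on the $F$- and remainder terms, absorption of the intermediate derivatives via \eqref{eq:gn}, and Gronwall. Your closing remark about why the asymmetry in \eqref{eq:fineq} is exactly what makes the absorption work is a correct reading of the hypothesis.
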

Theorem \ref{thm:genversion} indeed follows from this restatement, taking $F = P(X, \nabla X, \ldots, Y)$,
since (using the compactness of $M$), we can write 
\[
(P, X) = \sum_{p=0}^{2k-1}(C\ast \nabla^{(p)}X + C\ast Y, X)
\]
for some bounded (possibly zero) tensors $C$. Integrating by parts $p-k$ times on each term with $p > k$ verifies \eqref{eq:fineq}.
In the next section, in which $\Lc$ has order $2$, we will take $F$ in the form $F = \operatorname{div} U$ where  $U$ satisfies 
$U\leq C(|X| + |Y|)$.

\begin{proof}[Proof of Theorem \ref{thm:genineqversion}] The proof is a trivial modification of the standard
version for $Y \equiv 0$.
  Form the quantity $\Ec(t) \dfn \|X\|^2 + \|Y\|^2$. Since $M$ is closed, the contributions of the time-derivatives of $g$,
and $d\mu_g$ to the following computation
are all bounded, and therefore, integrating by parts and using the Cauchy-Schwarz inequality, we have
\begin{align*}
\begin{split}
  \dot{\Ec}&\leq C\Ec + 2\left(\pd{X}{t}, X\right) + 2\left(\pd{Y}{t}, Y\right)\\
&\leq C\Ec + 2\left(\pd{X}{t} - \Lc X - F, X\right) + 2(\mathcal{L}X, X)
+  2\left(F, X\right)
 + 2\left(\pd{Y}{t}, Y\right)\\
	   &\leq C\Ec +2(\Lc X, X) + C\left(\sum_{p=0}^k\|\nabla^{(p)}X\| + \|Y\|\right)
  \left(\sum_{p=0}^{k-1}\|\nabla^{(p)}X\| + \|Y\|\right)
\end{split}
\end{align*}
for some constant $C$.
Since $M$ is compact and $\Lc$ is strongly elliptic with a modulus of ellipticity that is uniform in $t$,
we may apply G\aa{}rding's inequality, which guarantees that 
there is some $\epsilon > 0$ such that
\[
(\Lc X , X) \leq -\epsilon \|\nabla^{(k)}X\|^2 + C\|X\|^2
\]
for all $t\in [0, \Omega]$. A  proof of this inequality for for strongly elliptic operators on $\RR^n$ can be found in \cite{Giaquinta};
the statement for operators on vector bundles over compact $M$ follows from a partition of unity argument.  (See the discussion in Section 2.1 of \cite{BahuaudHelliwellU}
and \cite{Taylor}, p.~461.)

Using Cauchy-Schwarz, we thus obtain
\[
  \dot{\Ec} \leq C\Ec  -\epsilon \|\nabla^{(k)}X\|^2 + C_1\sum_{p=1}^{k-1}\|\nabla^{(p)} X\|^2
\]
for some constant $C_1 = C_1(\epsilon)$.
On the other hand, inequality \eqref{eq:gn} implies
that there is a constant $C = C(\epsilon, k, C_1)$ such that,
for all $0 < p < k$,
\[
   \|\nabla^{(p)}X\|^2 \leq \frac{\epsilon}{C_1(k-1)} \|\nabla^{(k)}X\|^2 + C\|X\|^2.
\]
Therefore, we obtain $\dot{\Ec}\leq C\Ec$
on $[0, \Omega]$, and the claim follows.
\end{proof}
 
\section{The cross-curvature flow}
As an application of Theorem \ref{thm:genineqversion},
we give a proof of the uniqueness of solutions to the 
\emph{cross-curvature flow} of strictly positively or negatively curved metrics on a closed three-manifold $M = M^3$,
describing the prolongation procedure in detail. We first need to introduce some notation.

\subsection{The equation}
Suppose that $g$ has either strictly positive or strictly negative sectional curvature and define $\sigma$
to be $1$ if the curvature is positive and $-1$ otherwise. If $E(g) = \Rc(g) - S/2g$ is the Einstein metric of $g$,
then $\sigma E(g)$ will be negative definite.  We will use the notation 
$\operatorname{E}$ for 
the endomorphism $\operatorname{E}: TM\to TM$ given by $\operatorname{E}_i^j = g^{jk}E_{ik}$.

Let $V \in C^{\infty}(T^0_2(M))$ be the inverse of $E^{ij} = g^{ik}g^{kl}E_{ij}$, i.e., the tensor satisfying $V_{ik}E^{kj} = \delta^j_i$, 
and let $P \dfn \det(\operatorname{E})= \det(E_{ij})/\det(g_{ij})$. The \emph{cross-curvature tensor} of $g$
is then defined to be 
\begin{equation}\label{eq:xcdef}
    X_{ij} \dfn P V_{ij} = -\frac{1}{2}E^{pq}R_{pijq}.
\end{equation}
The \emph{cross-curvature flow} of a family of metrics $g(t)$ is the equation
\begin{align}
  \label{eq:xcf}
      \pdt g &= -\sigma 2X(g).
\end{align}
This equation was introduced by Hamilton and Chow in \cite{ChowHamiltonXCF} as a tool
to study three-manifolds of negative curvature. The short-time existence of solutions to the equation beginning
at metrics of positive or negative curvature was verified by Buckland \cite{Buckland}.

Here we give a proof of the following uniqueness assertion.
\begin{theorem}\label{thm:xcfuniqueness}
Let $M = M^3$ be a closed manifold.
 Suppose $g(t)$, $\gt(t)$ are two solutions to \eqref{eq:xcf} on $M\times [0, \Omega]$
with strictly positive or negative sectional curvature and $g(0) = \gt(0) = \bar{g}$. Then $g(t) = \gt(t)$
for all $t\in [0, \Omega]$.
\end{theorem}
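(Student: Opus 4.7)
The plan is to apply Theorem~\ref{thm:genineqversion} (with $k=1$) to a prolonged system whose strongly parabolic component is the Ricci tensor difference and whose auxiliary component collects the metric and connection differences.  Although the cross-curvature equation $\pdt g = -2\sigma X(g)$ is itself only weakly parabolic, the induced evolution for $\Rc$ is second order in $\Rc$ with principal coefficients proportional to $-\sigma E^{pq}$; this is uniformly positive definite on $M\times [0,\Omega]$ by the strict-sign assumption and the compactness of $M$, providing the required strong ellipticity.

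Fix $g(t)$ as the reference metric and set $h \dfn g-\gt$, $A \dfn \nabla -\nabt$, and $Z \dfn \Rc(g) - \Rc(\gt)$.  In dimension three the Riemann tensor is determined algebraically by the Ricci tensor, so the cross-curvature tensor is a polynomial expression in $g$, $g^{-1}$, and $\Rc$.  Consequently $X(g) - X(\gt) = C\ast h + C\ast Z$ in the notation of Section~2, which gives
\[
   \pdt h = C\ast h + C\ast Z,
\]
and in particular $\|\pdt h\| \leq C(\|h\|+\|Z\|)$.  Differentiating this identity and using $\pdt \Gam = g^{-1}\ast \nabla \pdt g$ together with \eqref{eq:conndiff}, one obtains $\|\pdt A\| \leq C(\|h\| + \|A\| + \|Z\| + \|\nabla Z\|)$, which with $Y = (h, A)$ matches the form required on $\pdt Y$ in \eqref{eq:xyineqsys}.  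For the $Z$-equation, the chain rule applied to $\pdt \Rc(g) = D\Rc_g[-2\sigma X(g)]$ identifies the second-order-in-$\Rc$ component as a linear operator $\Lc_g$ acting on $\Rc(g)$, the remainder being polynomial in $\Rc$ and $\nabla \Rc$ with smooth $g$-dependent coefficients.  Subtracting the analogous expression for $\gt$ and using \eqref{eq:conndiff} to absorb $(\Lc_g - \Lc_{\gt})\Rc(\gt)$ into the difference of lower-order terms yields
\[
   \pdt Z = \Lc_g Z + F,
\]
where $F$ has the schematic form $C\ast h + C\ast A + C\ast \nabla A + C\ast Z + C\ast \nabla Z$.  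Integration by parts moves the derivative off of $A$ in the offending term of $(F, Z)$, producing the bound required by \eqref{eq:fineq}.

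The principal symbol of $\Lc_g$ on symmetric two-tensors, obtained by tracking the second-derivative-in-$\Rc$ contributions from $D\Rc_g[X(g)]$ with $X_{ij}(g) = -\tfrac{1}{2}E^{pq}R_{pijq}$, is dominated by the scalar multiplication $-\sigma E^{pq}\xi_p \xi_q$, once the divergence-type contributions have been simplified using the second Bianchi identity.  Since $\sigma E$ has a fixed sign and is bounded away from degeneracy on the compact set $M\times [0,\Omega]$, there is $\lambda > 0$ with $-\sigma E^{pq}\xi_p\xi_q \geq \lambda |\xi|^2$ uniformly, so $\Lc_g$ is strongly elliptic with a modulus independent of $t$.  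Applying Theorem~\ref{thm:genineqversion} to the system above, with $h(0) = A(0) = Z(0) = 0$, gives $h \equiv 0$, $A\equiv 0$, and $Z \equiv 0$ on $[0,\Omega]$, and hence $g\equiv \gt$.

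The main technical step is the explicit identification of $\Lc_g$ and the verification of its strong ellipticity.  The divergence-type contributions that emerge from $D\Rc_g[X(g)]$ must be simplified via the second Bianchi identity before the coefficient $-\sigma E$ appears in the principal symbol in a manifestly definite form.  This computation, carried out directly on the cross-curvature flow without a DeTurck modification, is essentially the same analysis that underlies Buckland's short-time existence theorem for \eqref{eq:xcf}.
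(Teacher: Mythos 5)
Your strategy is sound and is essentially the paper's: prolong to a system $(\hat X,\hat Y)$ with $\hat Y=(h,A)$ evolving by an ODE-type inequality controlled by the curvature difference, feed the result into Theorem \ref{thm:genineqversion} with $k=1$, and get the strong ellipticity from the uniform definiteness of $\sigma E$ on the compact space-time. The one genuine difference is your choice of parabolic variable: you take $Z=\Rc-\Rct$, whereas the paper takes $W=V-\Vt$, the difference of the \emph{inverses} of the Einstein tensors. In dimension three these carry the same information modulo $h$ (the map $\Rc\mapsto E\mapsto V$ is a smooth pointwise algebraic bijection on the locus where $E$ is definite), so either choice closes the system; your bounds on $\pdt h$, $\pdt A$, and the structure $F=C\ast h+C\ast A+C\ast\nabla A+C\ast Z+C\ast\nabla Z$ with one integration by parts to verify \eqref{eq:fineq} all go through. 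What the paper's choice buys is that the crucial step you defer — identifying $\Lc_g$ and verifying its strong ellipticity — becomes a quotation: Chow–Hamilton already record $\pdt E^{ij}=\Box E^{ij}-\nabla_kE^{jl}\nabla_lE^{ik}-Pg^{ij}-\operatorname{tr}_g(X)E^{ij}$ with $\Box=-\sigma E^{ab}\nabla_a\nabla_b$, so the elliptic leading term is manifest and passes to $V=E^{-1}$ (and would pass to $\Rc=E-\operatorname{tr}(E)g$) by routine algebra, with no need to track the Bianchi cancellations inside $D\Rc_g[-2\sigma X(g)]$. As written, your assertion that the principal symbol of $\Lc_g$ is $-\sigma E^{pq}\xi_p\xi_q$ times the identity is correct but unproven in your text; to make the argument self-contained you should either carry out that symbol computation or, more economically, derive the evolution of $Z$ from the Chow–Hamilton equation for $E$ rather than from the linearization $D\Rc_g$. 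A second, minor point: the paper isolates the divergence-structure term as $F=\operatorname{div}U$ with $|U|\le C(|A|+|W|)$ precisely so that \eqref{eq:fineq} is immediate; your version requires the extra (harmless) integration by parts on the $C\ast\nabla A$ term, which you do note.
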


The proof in the positively and negatively curved cases are virtually the same, so we consider here only the case that $\bar{g}$
has strictly negative curvature. As we have observed, in this case, the Einstein tensors $E$, $\tilde{E}$ of $g$ and $\gt$ are 
positive definite.  Let $\lambda > 0$ be a constant such that
\begin{equation}\label{eq:ellipticity}
 \lambda g \leq E \leq \lambda^{-1}g, \quad \lambda g \leq \tilde{E} \leq \lambda^{-1}g
\end{equation}
on $M\times [0, T]$. (Since the manifold $M$ is compact, the solutions $g$ and $\gt$ are uniformly equivalent on $M$ for $t\in [0, \Omega]$.)

\subsection{The prolonged system}

To use Theorem \ref{thm:genineqversion}, we must encode the problem
of uniqueness into one for a prolonged system of the form \eqref{eq:xyineqsys}.  
This can be done in several ways. We find it convenient
to base the parabolic part (the $X$ component) of the system on the difference of the inverses $V$, $\Vt$ of the Einstein tensors of $g$ and $\gt$.
Thus, we define
\begin{equation}\label{eq:xcfxysys}
 h \dfn g- \gt, \quad A \dfn \Gam - \Gamt, \quad W\dfn V - \Vt,
\end{equation}
and introduce the operator $\Box \dfn E^{ab}\nabla_a\nabla_b$. (In general, we take
$\Box = -\sigma E^{ab}\nabla_a\nabla_b$.)
These quantities together satisfy a closed system of differential inequalities relative to the norms 
and connection induced by $g(t)$.
\begin{proposition}\label{prop:xcfsys}
  Under the assumptions of Theorem \ref{thm:xcfuniqueness}, the tensors $W$, $h$, $A$ satisfy the inequalities 
\begin{align*}
 \left|\pdt W - \Box W - \operatorname{div} U\right| &\leq  C\left(|h| + |A| + |W| + |\nabla W|\right)\\
 \left|\pdt h\right| &\leq C(|h|+ |W|)\\
  \left|\pdt A\right| &\leq C(|h|+ |A| + |W| + |\nabla W|)
\end{align*}
for some constant $C$, where $U$ is the $(2, 1)$-tensor given by
\[
 U_{ij}^a = (E^{ab} - \Et^{ab})\nabt_a\Vt_{ij} - E^{ab}A_{ai}^p\Vt_{pj} - E^{ab}A_{aj}^p\Vt_{ip}.
\]
The tensor $U$ satisfies $|U|\leq C(|A| + |W|)$.
\end{proposition}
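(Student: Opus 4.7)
The plan is to verify each of the three inequalities by direct computation, leveraging the pointwise algebraic identity $X_{ij} = PV_{ij}$. Because $V_{ik}E^{kj} = \delta_i^j$ uniquely determines $E$ as the matrix inverse of $V$ (with index-raising by $g$) and $P = \det(\operatorname{E}) = 1/\det(\operatorname{V})$, the cross-curvature tensor may be regarded as a smooth pointwise function $X = \Phi(g, V)$ of $g$ and $V$ alone. The ellipticity bounds \eqref{eq:ellipticity} furnish uniform bounds on $V$, $\Vt$, and on the partial derivatives of $\Phi$ along the segment between $(g, V)$ and $(\gt, \Vt)$.

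For the $h$-inequality, I would apply the mean value theorem to $\Phi$ to obtain $|X(g) - X(\gt)| \leq C(|h| + |W|)$; the defining equation $\pdt h = -2\sigma(X(g) - X(\gt))$ then yields the estimate immediately. For the $A$-inequality, I would take the difference of the standard variation formula
\[
  \pdt \Gamma_{ij}^k = \tfrac{1}{2}g^{kl}(\nabla_i \pdt g_{jl} + \nabla_j \pdt g_{il} - \nabla_l \pdt g_{ij})
\]
at $g$ and $\gt$. The dominant terms are single covariant derivatives of $X - \Xt$ which, after differentiation of $\Phi$, are polynomial expressions in $\nabla h$ and $\nabla V - \nabt \Vt$. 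The former is $O(|A|)$ since $\nabla g = 0$ yields $\nabla h = -\nabla \gt = A\ast \gt$, and the latter is $\nabla W + A\ast \Vt$ by the connection-difference identity \eqref{eq:conndiff}; the remaining pieces are pointwise controlled by $C(|h|+|A|+|W|)$ because $\gt$ and $\Vt$ are smooth on the compact manifold $M$.

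The $W$-inequality is the core of the argument. Differentiating $V_{ik}E^{kj}=\delta_i^j$ gives $\pdt V_{il} = -V_{ik}V_{jl}\pdt E^{kj}$, so the second-order content of $\pdt V$ is carried through $\pdt E$ by $\pdt \operatorname{Rc}$. Substituting $\pdt g = -2\sigma X$ into the classical variation formula for $\operatorname{Rc}$, applying the contracted second Bianchi identity to cancel the $\operatorname{Rc}$-divergence terms, and invoking the three-dimensional algebraic identity expressing $\operatorname{Rm}$ in terms of $\operatorname{Rc}$, the principal part of $\pdt V$ should collapse to exactly $\Box V = -\sigma E^{ab}\nabla_a\nabla_b V$ modulo pointwise polynomials in $g$, $V$, and $\nabla V$. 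Subtracting the analogous equation for $\Vt$ yields $\pdt W = \Box W + (\Box - \widetilde{\Box})\Vt + \text{(lower order)}$; the symbol mismatch $(\Box - \widetilde{\Box})\Vt$ expands, via $\nabla_a\nabla_b\Vt = \nabt_a\nabt_b\Vt + A\ast\nabt\Vt + \nabla A\ast\Vt$ combined with the matrix identity $E^{ab} - \Et^{ab} = -E^{ap}W_{pq}\Et^{qb}$ (from $E = V^{-1}$), into precisely the divergence $\operatorname{div} U$ for the tensor $U$ displayed in the proposition. The bound $|U| \leq C(|A|+|W|)$ then follows from the three summands defining $U$ together with the uniform bounds on $\Vt$ and $\nabt\Vt$.

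The main obstacle will be identifying the principal symbol of $\pdt V$ correctly. The evolution of $\operatorname{Rc}$ under $\pdt g = -2\sigma X$ contributes many second-order terms in $V$, and one must track each carefully, using both the Bianchi identities and the three-dimensional reduction to verify that all contributions collapse to a single operator of the form $E^{ab}\nabla_a\nabla_b$ acting on $V$. The second delicate point is reassembling the remaining second-order mismatches into the divergence $\operatorname{div} U$ rather than into a non-divergence pointwise expression, since this divergence structure is precisely what makes the hypothesis \eqref{eq:fineq} of Theorem \ref{thm:genineqversion} applicable in the subsequent energy argument.
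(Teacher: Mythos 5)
Your proposal is correct and follows essentially the same route as the paper: the same three estimates, the same use of $\nabla_i E^{ij}=0$ to package the second-order mismatch as the divergence $\operatorname{div} U$ of the displayed tensor $U$, and the same matrix-inverse identity $E^{ab}-\Et^{ab}=-E^{ak}\Et^{bl}W_{kl}$ to bound $|U|$. The only difference is that the paper simply quotes the evolution equation $\pdt E^{ij} = \Box E^{ij} - \nabla_k E^{jl}\nabla_l E^{ik} - Pg^{ij} - \operatorname{tr}_g(X)E^{ij}$ from \cite{ChowHamiltonXCF} and inverts it via $V_{ik}E^{kj}=\delta_i^j$ to obtain the $V$-equation, rather than re-deriving the principal part from the linearization of $\Rc$ and the three-dimensional curvature decomposition as you propose.
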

We prove Proposition \ref{prop:xcfsys} in the next section.

\begin{proof}[Proof of Theorem \ref{thm:xcfuniqueness}]
 Proposition \ref{prop:xcfsys} implies that $\hat{X} = W$ and $\hat{Y} = h \oplus A$ satisfy
\begin{align*}
  \left|\pd{\hat{X}}{t} - \Box \hat{X} -\operatorname{div} U\right| &\leq C\left(|\hat{X}| + |\nabla\hat{X}| + |\hat{Y}|\right),\quad
  \left|\pd{\hat{Y}}{t}\right| \leq C\left(|\hat{X}| + |\nabla \hat{X}| + |\hat{Y}|\right)
\end{align*}
where $|U| \leq C(|\hat{X}| +  |\hat{Y}|)$ for some constant $C$.  The operator $\Box$ is strongly elliptic
under our assumptions, and $F = \operatorname{div}(U)$ satisfies
\[
 (F, \hat{X}) = -(U,\nabla \hat{X}) \leq C\|\nabla\hat{X}\|(\|\hat{X}\| + \|\hat{Y}\|).
\]
The result is now an immediate application of Theorem \ref{thm:genineqversion}.
\end{proof}

In fact, when formulated in terms of $\hat{X}$, $\hat{Y}$ and $U$, the uniqueness assertion of Theorem \ref{thm:xcfuniqueness}
essentially follows
from our earlier general theorem in \cite{KotschwarRFU}, modulo the verification of the inequalities in Proposition \ref{prop:xcfsys}; see Remark 15
in that reference.

\subsection{Evolution equations and proof of Proposition \ref{prop:xcfsys}.}

\begin{lemma} Let $g(t)$ be a solution to \eqref{eq:xcf}.  The Levi-Civita connection and the 
tensor $V$ associated to $g$ evolve according to the 
equations
\begin{align}
\label{eq:xconnev}
  \pdt\Gamma_{ij}^k  &= g^{mk}\left\{\nabla_i(P V_{jm}) + \nabla_j(P V_{im}) -\nabla_{m}(P V_{ij})\right\}\\
\label{eq:xvev}
  \pdt V_{ij} &= \Box V_{ij} + (E^{al}E^{kb} - 2E^{ab}E^{kl})\nabla_kV_{ai}\nabla_l V_{bj} + Pg^{kl}(V_{ik}V_{jl} + V_{ij}V_{kl}).
\end{align}
\end{lemma}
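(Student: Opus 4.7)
The first identity is immediate from the standard variation formula
\[
\pdt \Gamma_{ij}^k = \tfrac{1}{2}g^{km}\left(\nabla_i \pdt g_{jm} + \nabla_j \pdt g_{im} - \nabla_m \pdt g_{ij}\right)
\]
combined with the substitution $\pdt g_{ij} = -2\sigma X_{ij} = 2PV_{ij}$, since we have specialized to the case $\sigma = -1$; the factors of $2$ cancel, giving the displayed expression.

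For the second identity, my plan is to differentiate the defining relation $V_{ia}E^{ab} = \delta_i^b$ to obtain
\[
\pdt V_{ij} = -V_{ia}V_{jb}\pdt E^{ab},
\]
thereby reducing the problem to computing the evolution of the contravariant Einstein tensor. Writing $E^{ab} = g^{ac}g^{bd}(R_{cd} - \tfrac{1}{2}Sg_{cd})$ and applying the Leibniz rule, $\pdt E^{ab}$ splits into three contributions: (i) terms from $\pdt g^{ij} = -2Pg^{ia}g^{jb}V_{ab}$ acting on the raising factors, (ii) the standard Lichnerowicz-type formula for $\pdt R_{ij}$ under a general metric variation $\pdt g = h$, applied with $h = 2PV$, and (iii) the corresponding trace formula for $\pdt S$. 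These are all standard curvature variation formulas, entirely analogous to those used in the proof of Lemma \ref{lem:rev}.

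The principal technical work is to show that the top-order contributions assemble into $\Box V_{ij} = E^{ab}\nabla_a\nabla_b V_{ij}$. The Lichnerowicz formula a priori produces several second-derivative terms in $h = 2PV$ that are not individually of this form, including Bianchi-type double-divergences of the form $\nabla^a\nabla^b(PV_{ab})$. The contracted second Bianchi identity, however, yields $\nabla^i E_{ij} = 0$, so $E^{ab}$ is divergence-free; using this together with the Leibniz expansion of $\nabla\nabla(PV)$, the problematic double-divergence terms can be reorganized into either $\Box V$ or into lower-order algebraic contractions of $V$ with $E$. After contracting the resulting expression with $V_{ia}V_{jb}$ and repeatedly invoking $V_{ic}E^{cb} = \delta_i^b$, the leading-order part collapses precisely to $\Box V_{ij}$.

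The remaining terms distribute into the two lower-order families on the right-hand side. Derivatives that land on the scalar $P$ can be eliminated via the logarithmic-derivative identity $\nabla_k P = -PE^{ab}\nabla_k V_{ab}$ (obtained from $P = \det \operatorname{E}$ and $\nabla_k E^{ab} = -E^{ac}E^{bd}\nabla_k V_{cd}$); combining this with the mixed-derivative contributions from the expansion of $\Box(PV)$ produces the quadratic expression $(E^{al}E^{kb} - 2E^{ab}E^{kl})\nabla_k V_{ai}\nabla_l V_{bj}$, where the ``$-2$'' reflects a doubling from the symmetry of $\nabla_k V$ and the coefficient from $\nabla_k P$. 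The contributions from $\pdt g^{ij}$, together with zeroth-order curvature remainders obtained by commuting derivatives, collect into the $Pg^{kl}(V_{ik}V_{jl} + V_{ij}V_{kl})$ piece. The main obstacle is not conceptual but purely algebraic bookkeeping: tracking sign conventions and combinatorial coefficients with enough care to pin down the exact form of the quadratic $\nabla V$ coefficient tensor.
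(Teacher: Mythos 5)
Your treatment of \eqref{eq:xconnev} coincides with the paper's: the standard variation formula for the Christoffel symbols combined with $\pdt g = 2PV$ in the negatively curved case, with the factors of $2$ and $\tfrac12$ cancelling. For \eqref{eq:xvev} you take a genuinely different route. The paper does not re-derive the evolution of the Einstein tensor; it quotes from \cite{ChowHamiltonXCF} the formula
\[
    \pdt E^{ij} = \Box E^{ij} - \nabla_k E^{jl}\nabla_lE^{ik} - Pg^{ij} - \operatorname{tr}_g(X)E^{ij},
\]
and then performs exactly the inversion step you begin with, $\pdt V_{ij} = -V_{ia}V_{jb}\,\pdt E^{ab}$, together with $\nabla_kE^{ab} = -E^{ac}E^{bd}\nabla_kV_{cd}$ and $V_{ia}E^{ab}=\delta_i^b$. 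From that starting point the verification really is routine: the $-2E^{ab}E^{kl}$ part of the quadratic coefficient arises from the two product-rule terms in $-V_{ia}V_{jb}\Box E^{ab}$, the $E^{al}E^{kb}$ part from $+V_{ia}V_{jb}\nabla_kE^{bl}\nabla_lE^{ak}$, and the zeroth-order terms from $+PV_{ia}V_{jb}g^{ab}$ and $+\operatorname{tr}_g(X)V_{ij}$ with $\operatorname{tr}_g(X)=Pg^{kl}V_{kl}$.

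You instead propose to re-derive $\pdt E^{ab}$ from the Lichnerowicz-type linearizations of $\Rc$ and $S$ in the direction $h=2PV$. That is legitimate---it is essentially how the formula is obtained in \cite{ChowHamiltonXCF}---but the step you set aside as ``algebraic bookkeeping'' is precisely the nontrivial content of the lemma. A priori the linearization produces second-order terms of the form $\Delta(PV_{ij})$, $\nabla_i\nabla_j(P\operatorname{tr}_gV)$, $\nabla_i\nabla^k(PV_{jk})$, and $g_{ij}$ times traces of these, all modelled on $g^{ab}\nabla_a\nabla_b$ rather than on $E^{ab}\nabla_a\nabla_b$; the fact that they assemble into the degenerate operator $\Box V_{ij}$ plus first-order terms is the structural point---special to $n=3$ and to flowing by $X=PV$ itself---that makes the flow weakly parabolic, not a matter of tracking signs. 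Your appeal to $\nabla_aE^{ab}=0$ and to $\nabla_kP=-PE^{ab}\nabla_kV_{ab}$ (both of which are correct) identifies the right tools, but the cancellation is asserted rather than exhibited, and the exact coefficients $(E^{al}E^{kb}-2E^{ab}E^{kl})$ and $Pg^{kl}(V_{ik}V_{jl}+V_{ij}V_{kl})$, which are what the lemma actually claims, are never pinned down. Either carry that computation through, or do what the paper does: cite the $E^{ij}$ evolution from \cite{ChowHamiltonXCF} and reduce the lemma to the short inversion argument you correctly began with.
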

\begin{proof}
 Equation \eqref{eq:xconnev} follows from the standard formula for the evolution of the connection and \eqref{eq:xcdef}.
 Equation \eqref{eq:xvev} follows from some routine calculations from the formula
\[
    \pdt E^{ij} = \Box E^{ij} - \nabla_k E^{jl}\nabla_lE^{ik} - Pg^{ij} - \operatorname{tr}_g(X)E^{ij}
\]
from \cite{ChowHamiltonXCF}, using $V_{ik}E^{kj} =\delta_i^j$. 
\end{proof}

Now, the Einstein tensor is divergence-free, i.e., $\nabla_i E^{jk} = 0$, and so we compute that 
(cf. the computations on pp. 6-7 of \cite{KotschwarRFU}) 
\begin{align*}
&  \Box V_{ij} -\Box \Vt_{ij} = \nabla_{a}\left(E^{ab}\nabla_b V_{ij}\right) - \nabt_a\left(\Et^{ab}\nabt_b\Vt_{ij}\right)\\
\begin{split}
    &\quad= \Box (V_{ij} - \Vt_{ij}) + \nabla_aU^{a}_{ij}  +A_{ap}^a\Et^{pb}\nabt_b\Vt_{ij}
 - A_{ab}^p\Et^{ab}\nabt_p\Vt_{ij} 
       - A_{ai}^p\Et^{ab}\nabt_b\Vt_{pj},
 \end{split}
\end{align*}
where $\Et^{ab} = \gt^{ac}\gt^{bd}\widetilde{E}_{cd}$, and 
\[
 U_{ij}^a = (E^{ab} - \Et^{ab})\nabt_a\Vt_{ij} - E^{ab}A_{bi}^p\Vt_{pj} - E^{ab}A_{bj}^p\Vt_{ip}.
\]
Since $E^{ab} - \Et^{ab} = -E^{ak}\Et^{bl}W_{kl}$,
we have $|U| \leq C(|A| + |W|)$ for some constant $C$. Returning to \eqref{eq:xvev} and putting things together, we obtain 
\begin{equation}
 \label{eq:wev}
    \left|\pdt W - \Box W  - \operatorname{div} U\right| \leq C\left(|h| + |A| + |W| + |\nabla W|\right)
\end{equation}
on $M\times [0, \Omega]$ as desired.

The tensor $h$, meanwhile, evolves according to 
\[
  \pdt h = 2(X - \Xt) = 2P(V-\Vt) + 2(P-\Pt)\Vt.
\]
The first term is just $2PW$, and for the second term, we have $|P - \Pt| \leq C(|h| + |W|)$
for some $C$ since the Einstein tensors of $g$ and $\gt$ are uniformly strictly positive on $M$.
Thus
\begin{equation*}
% \label{eq:xhev}
  \left|\pdt h\right| \leq C\left(|h| + |W|\right).
\end{equation*}

Finally, from \eqref{eq:xconnev}, we have the schematic equation
\begin{align*}
 \pdt\Gamma  &= P g^{-1}\ast \nabla V + g^{-1}\ast\nabla P \ast V.	
\end{align*}
Now,
\begin{align*}
 \nabla V -\nabt \Vt &= \nabla W + A\ast \Vt, \quad \nabla P -\nabt \Pt = P\ast E\ast\nabla V - \Pt\ast\Et\ast\nabt\Vt,
\end{align*}
so expanding the last difference on the right into three terms and estimating them as above, we obtain
\[
 \left|\pdt A \right| \leq C(|h| + |A| + |W| + |\nabla W|)
\]
for some $C$. This completes the proof of Proposition \ref{prop:xcfsys}.

\end{document}